\newtheorem{proposition}{Proposition}
\newtheorem{lemma}{Lemma}
\newtheorem{theorem}{Theorem}
\newtheorem{corollary}{Corollary}
\theoremstyle{definition}
\newtheorem{example}{Example}
\theoremstyle{remark}
\newtheorem {remark}{Remark}
\DeclareMathOperator{\Spec}{Spec}
\DeclareMathOperator{\Aut}{Aut}
\DeclareMathOperator{\Cl}{Cl}
\DeclareMathOperator{\GL}{GL}
\DeclareMathOperator{\codim}{codim}
\DeclareMathOperator{\rk}{rk}
\def\GG{{\mathbb G}}
\def\FF{{\mathbb F}}
\def\CC{{\mathbb C}}
\def\KK{{\mathbb K}}
\def\PP{{\mathbb P}}
\def\AA{{\mathbb A}}
\def\XX{{\mathbb X}}
\def\OOO{\mathcal{O}}
\def\mmm{\mathfrak{m}}
\begin{document}
\date{}
\title[Prehomogeneous modules of commutative algebraic groups]{Prehomogeneous modules of commutative linear algebraic groups}
\author{Ivan Arzhantsev}
\thanks{The research was supported by the grant RSF-DFG 16-41-01013}
\address{National Research University Higher School of Economics, Faculty of Computer Science, Kochnovskiy Proezd 3, Moscow, 125319 Russia}
\email{arjantsev@hse.ru}

\subjclass[2010]{Primary 13E10, 13H; \ Secondary 13A50, 20G05, 14L17}

\keywords{Prehomogeneous module, commutative group, finite dimensional algebra, Hassett-Tschinkel correspondence, affine algebraic monoid, toric variety, Cox ring}

\maketitle

\begin{abstract}
Let $A$ be a finite dimensional commutative associative algebra with unit over an algebraically closed field of characteristic zero. The group $G(A)$ of invertible elements is open in $A$ and thus $A$ has a structure of a prehomogeneous $G(A)$-module. We show that every prehomogeneous module of a commutative linear algebraic group appears this way. In particular, the number of equivalence classes of prehomogeneous $G$-modules is finite if and only if the corank of $G$ is at most $5$.
\end{abstract}

\section{Introduction}
\label{sec1}

Let $G$ be a linear algebraic group over an algebraically closed field $\KK$ of characteristic zero. A finite dimensional $G$-module $V$ is called prehomogeneous if the linear action $G\times V\to V$ is effective and has an open orbit in $V$. Prehomogeneous modules play an important role in geometry, number theory and analysis, as well as representation theory.

For connected simple algebraic groups  prehomogeneous modules were classified by Vinberg~\cite{Vi}. A classification of irreducible prehomogenenous modules for connected reductive algebraic groups was obtained by Sato and Kimura~\cite{SK} and Schpiz~\cite{Sh}. For more recent results on this subject, see~\cite{Ki} and references therein.

In this paper we study prehomogeneous modules of commutative linear algebraic groups. Denote by $\GG_m$ the multiplicative group and by $\GG_a$ the additive group of the ground field~$\KK$. It is well known that any connected commutative linear algebraic group $G$ is isomorphic to $(\GG_m)^{r}\times(\GG_a)^s$ with some non-negative integers $r$ and $s$, see \cite[Theorem~15.5]{Hum}. We say that $r$ is the \emph{rank} and $s$ is the \emph{corank} of the group $G$.

It is an impotant problem to describe regular actions of a commutative linear algebraic  group $G$ on algebraic varieties $X$ with an open orbit. If $s=0$ then $G$ is a torus and we come to the classical theory of toric varieties, see \cite{De,Oda,Fu,CLS}. The case $s=1$ is studied in~\cite{AK}. It turns out that the variety $X$ in this case is toric as well, and $G$-actions on $X$ with an open orbit are determined by Demazure roots of $X$.

Another extreme $r=0$ corresponds to embeddings of commutative unipotent (=vector) groups. This case is studied actively during last decades, see \cite{HT, Sha, Arz2, AS, Fe, FH, AR}.

The aim of this paper is to study linear $G$-actions with an open orbit and to give linearizability criteria for some classes of actions of commutative groups on affine spaces.
The~paper is organized as follows. In Section~\ref{sec2} we discuss preliminary results on prehomogeneous modules of commutative linear algebraic groups. Section~\ref{sec3} contains basic facts on finite dimensional algebras. We recall Hassett-Tschinkel correspondence between actions of commutative unipotent groups on projective spaces with an open orbit and local finite dimensional algebras. Also we list all $42$ local algebras of dimension up to $6$. The~classification is taken from~\cite[Section~2]{Ma}. It seems to be not widely known.

In Section~\ref{sec4} we show that every prehomogenenous module $V$ of a commutative linear algebraic group $G$ is isomorphic to the $G(A)$-module $A$, where $A$ is a finite dimensional commutative associative algebra $A$ with unit and $G(A)$ is its group of invertible elements (Theorem~\ref{tmain}). This result implies that the number of equivalence classes of prehomogeneous $G$-modules is finite if and only if the corank of the group $G$ is at most $5$ (Corollary~\ref{corpol}). We prove that the number of prehomogeneous modules with a finite number of orbits and the number of prehomogenenous modules such that the acting group is normalized by all invertible diagonal matrices are finite for every commutative linear algebraic group $G$ (Corollaries~\ref{cor1} and~\ref{cor2}). Such modules allow an explicit description in terms of the corresponding finite dimensional algebras. Proposition~\ref{propcyc} shows that every cyclic module of a commutative linear algebraic group is obtained from a prehomogeneous module of a bigger commutative group by restriction to an action of a subgroup.

In Section~\ref{sec5} we deal with prehomogeneous modules in the framework of the theory of affine algebraic monoids and group embeddings. Consider an affine algebraic monoid $S$ isomorphic as a variety to an affine space. Proposition~\ref{propmul} claims that the monoid $S$ is the multilplicative monoid of a finite dimensional algebra if and only if the action of the group $G(S)\times G(S)$ on $S$ by left and right multiplication is linearizable. It leeds to an alternative proof of Theorem~\ref{tmain}.

In the last section we consider additive actions on toric varieties $X$, i.e., regular actions $\GG_a^m\times X\to X$ with an open orbit. Lifting such actions to the spectrum of the Cox ring of the variety $X$ we obtain actions of a commutative linear algebraic group $G$ on $\AA^n$ with an open orbit. We show that such actions are linearizable if and only if $X$ is a big open toric subset of a product of projective spaces (Proposition~\ref{assact}).

In a forthcoming paper we plan to study non-linearizable actions of commutative linear algebraic groups on $\AA^n$ with an open orbit or, equivalently, commutative monoid structures on affine spaces that do not correspond to finite dimensional algebras.

\smallskip

\emph{Important Comment.} When the first version of this text has appeared, Professor Friedrich Knop pointed our attention to paper~\cite{KL}. Proposition~5.1 of~\cite{KL} is equivalent to our Theorem~\ref{tmain} and the result is proved there over an arbitrary field.

\section{Preliminaries}
\label{sec2}

Let $V$ be a finite dimensional vector space and $G$ a closed subgroup of the group $\GL(V)$. We say that $V$ is a \emph{prehomogeneous $G$-module} if the induced $G$-action on $V$ has an open orbit. A $G$-module $V$ is \emph{equivalent} to a $G'$-module $V'$ if there exists an isomorphism of vector spaces $V$ and $V'$ such that the induced isomorphism of the groups $\GL(V)$ and $\GL(V')$ identifies the subgroups $G$ and $G'$.

\begin{lemma} \label{lemdim}
If $V$ is a prehomogeneous module of a commutative linear algebraic group $G$, then $\dim V=\dim G$.
\end{lemma}

\begin{proof}
By definition, the module $V$ contains an open $G$-orbit $\OOO$. The stabilizer of a point on $\OOO$ acts trivialy on $\OOO$ and hence on $V$. Since the group $G$ acts on $V$ effectively, the action of $G$ on $\OOO$ is free, and $\dim V=\dim\OOO=\dim G$.
\end{proof}

\begin{remark} \label{polex}
For a commutative linear algebraic group $G$ there may exist a faithful $G$-module $V$ with $\dim V<\dim G$. For example, take $G=\GG_m\times(\GG_a)^{n^2}$ and its representation given by
$(2n\times 2n)$-matrices of the form
$$
\begin{pmatrix}
\lambda E & A \\
0 & \lambda E
\end{pmatrix}, \
\text{where} \ \lambda\in\GG_m \ \text{and} \ A\in\text{Mat}(n\times n,\KK).
$$
\end{remark}

\begin{lemma}\label{connected}
If a commutative linear algebraic group $G$ admits a prehomogeneous $G$-module~$V$, then $G$ is connected.
\end{lemma}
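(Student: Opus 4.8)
The plan is to exploit the fact, established in Lemma~\ref{lemdim}, that $G$ acts freely on the open orbit $\OOO\subseteq V$ with $\dim G=\dim V$, together with the irreducibility of the affine space $V$. Write $G=\bigsqcup_{i} g_i G^{\circ}$ as a disjoint union of the cosets of the connected component $G^{\circ}$; showing that there is only one coset, i.e. $G=G^{\circ}$, is exactly the assertion to be proved.

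First I would record that each $G^{\circ}$-orbit inside $\OOO$ has dimension $\dim G^{\circ}=\dim G=\dim V$. Indeed, $G^{\circ}$ inherits the free action of $G$ on $\OOO$, so the orbit map $G^{\circ}\to G^{\circ}\cdot v$ is injective and the orbit has dimension $\dim G^{\circ}$. Since an orbit is a smooth irreducible locally closed subvariety and $V$ is irreducible, a $G^{\circ}$-orbit of full dimension $\dim V$ is open and dense in $V$.

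Next, fix $v_0\in\OOO$ and decompose $\OOO=G\cdot v_0=\bigcup_i g_iG^{\circ}\cdot v_0$. Because $G$ is commutative (so $G^{\circ}$ is normal and $g_iG^{\circ}=G^{\circ}g_i$), each piece $g_iG^{\circ}\cdot v_0=G^{\circ}\cdot(g_iv_0)$ is a single $G^{\circ}$-orbit, hence by the previous step open and dense in $V$. Two nonempty open subsets of the irreducible variety $V$ must intersect, while distinct $G^{\circ}$-orbits are disjoint; therefore all these orbits coincide with $G^{\circ}\cdot v_0$. In particular $g_iv_0\in G^{\circ}\cdot v_0$, i.e. $g_iv_0=hv_0$ for some $h\in G^{\circ}$, and freeness of the $G$-action on $\OOO$ forces $g_i=h\in G^{\circ}$. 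Thus every coset equals $G^{\circ}$ and $G$ is connected.

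The step requiring the most care is the claim that a $G^{\circ}$-orbit of dimension $\dim V$ is open in $V$: this rests on the standard facts that orbits are locally closed and that a constructible, irreducible, full-dimensional subset of an irreducible variety is open and dense. Everything else is formal once freeness and the dimension count of Lemma~\ref{lemdim} are in hand. Alternatively, one can shortcut the argument by noting that freeness makes the orbit map $G\to\OOO$ an isomorphism of varieties (in characteristic zero), so that $G\cong\OOO$ is connected because $\OOO$ is open in the irreducible space $V$.
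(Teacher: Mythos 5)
Your argument is correct, but your main line of reasoning is noticeably more roundabout than the paper's. The paper disposes of the lemma in one sentence: since the action on the open orbit $\OOO$ is free (this is the content of the proof of Lemma~\ref{lemdim}), the orbit map identifies $G$ with $\OOO$ as a variety, and $\OOO$ is a dense open subset of the irreducible variety $V$, hence irreducible; so $G$ is irreducible, i.e.\ connected. That is exactly the ``shortcut'' you mention in your last sentence. Your primary argument instead decomposes $G$ into cosets of $G^{\circ}$, shows each $G^{\circ}$-orbit in $\OOO$ is open and dense in $V$ by a dimension count (full-dimensional locally closed irreducible subsets of an irreducible variety are open), concludes that all these orbits coincide since distinct orbits are disjoint while dense open sets must meet, and then uses freeness to force every coset representative into $G^{\circ}$. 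This is valid and has the mild advantage of using only injectivity of the orbit map plus dimension theory, rather than the (characteristic-zero) fact that a bijective orbit map onto a homogeneous space is an isomorphism of varieties; but in the present setting the paper's one-line identification $G\cong\OOO$ buys the same conclusion with far less machinery, and your normality remark about $g_iG^{\circ}=G^{\circ}g_i$ is not even needed once one observes $G\cong\OOO$ directly.
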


\begin{proof}
By definition, $G$ is isomorphic as a variety to a dense open subset of the module $V$. Hence $G$ is irreducible or, equivalently, connected.
\end{proof}

\begin{proposition}\label{norm}
Let $G$ be a commutative linear algebraic group and $V$ a prehomogeneous $G$-module. Then the group $G$ coincides with its centralizer in $\GL(V)$. In particular, $G$ is a maximal commutative subgroup of $\GL(V)$.
\end{proposition}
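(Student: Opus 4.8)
The plan is to show directly that the centralizer $C := C_{\GL(V)}(G)$ is contained in $G$; the reverse inclusion is automatic since $G$ is commutative. The only property I will really use is that $V$ contains a dense open $G$-orbit $\OOO$, so I fix a point $v_0 \in \OOO$, which gives $\OOO = G \cdot v_0$.

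The key step is to check that every $z \in C$ preserves the open orbit. Indeed, since $z$ commutes with every element of $G$, one has $z \cdot \OOO = z \cdot (G \cdot v_0) = G \cdot (z \cdot v_0)$, so $z \cdot \OOO$ is again a single $G$-orbit. On the other hand, $z$ is a linear automorphism of $V$, hence it maps the dense open set $\OOO$ onto the dense open set $z \cdot \OOO$. As $V$ is irreducible, the two dense open subsets $\OOO$ and $z \cdot \OOO$ meet; being $G$-orbits, they must then coincide. In particular $z \cdot v_0 \in \OOO = G \cdot v_0$, so there is $g \in G$ with $z \cdot v_0 = g \cdot v_0$.

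It remains to deduce $z = g$. Put $w := g^{-1} z$. Then $w \in C$ (as $g \in G \subseteq C$) and $w \cdot v_0 = v_0$. For an arbitrary point $h \cdot v_0 \in \OOO$ with $h \in G$, using $wh = hw$ I obtain $w \cdot (h \cdot v_0) = h \cdot (w \cdot v_0) = h \cdot v_0$, so $w$ fixes $\OOO$ pointwise. The fixed-point locus $\{v \in V : w \cdot v = v\} = \Ker(w - \mathrm{id})$ is a linear, hence Zariski closed, subspace of $V$ containing the dense set $\OOO$; therefore it equals $V$ and $w = \mathrm{id}$. Thus $z = g \in G$, which yields $C = G$.

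Finally, the maximality assertion is formal: if $H \subseteq \GL(V)$ is any commutative subgroup with $G \subseteq H$, then every element of $H$ commutes with all of $G$, so $H \subseteq C = G$, whence $H = G$. I do not expect a genuine obstacle here: the one place where prehomogeneity is essential is the density argument showing that a centralizing element cannot move the open orbit, and everything else amounts to bookkeeping with the orbit map and the closedness of a linear fixed-point set.
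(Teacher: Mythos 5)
Your proof is correct and follows essentially the same route as the paper: show that a centralizing element preserves the dense open orbit, identify its action there with that of some $g\in G$, and conclude by density of $\OOO$ in $V$. The only difference is that you spell out in elementary terms (via the element $w=g^{-1}z$ and the closed linear fixed-point locus) what the paper compresses into the assertion that the group of $G$-equivariant automorphisms of $\OOO$ is $G$ itself.
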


\begin{proof}
Let $C$ be the centralizer of $G$ in $\GL(V)$. The group $C$ preserves an open $G$-orbit $\OOO$ in $V$. The group of $G$-equivariant automorphisms of the orbit $\OOO$ coincides with $G$. Hence for every $c\in C$ there exists an element $g\in G$
whose action on $\OOO$ coincides with the action of $c$. Since $\OOO$ is open and dense in $V$, we conclude that $C=G$.
\end{proof}

\section{Finite dimensional algebras and Hassett-Tschinkel correspondence}
\label{sec3}

Let $A$ be a finite dimensional commutative associative algebra with unit over the ground field $\KK$. It is well known that $A$ admits a unique decomposition $A=A_1\oplus\ldots\oplus A_r$ into a direct sum of local algebras $A_i$ with maximal ideals $\mmm_i$; see,  e.g.,~\cite[Theorem~8.7]{AM}. Moreover, every algebra $A_i$ decomposes as a vector space to $\KK\oplus\mmm_i$, all elements in $\mmm_i$ are nilpotent and all elements in $A_i\setminus\mmm_i$ are invertible. In particular, the group of invertible elements $G(A_i)$ equals $\KK^{\times}\oplus\mmm_i$.

\smallskip

In~\cite{HT}, Hassett and Tschinkel  established a correspondence between local algebras $A$ of dimension $n$ and effective actions of the commutative unipotent group $\GG_a^{n-1}$ on the projective space $\PP^{n-1}$ with an open orbit.
Here the projective space is realized as the projectivization $\PP(A)$ and the $\GG_a^{n-1}$-action comes from the action of the group $\exp(\mmm)=1+\mmm$ on $A$ by multiplication; see also~\cite[Section~1]{AS}.

\smallskip

These results may be reformulated as follows, cf. \cite[Theorem~2.14]{HT} and \cite[Theorem~1.1]{AS}.

\begin{proposition} \label{propht}
Equivalence classes of prehomogeneous modules $V$ of the group $\GG_m\times\GG_a^{n-1}$, where the torus $\GG_m$ acts on $V$ by scalar multiplication, are in bijection with isomorphy classes of local algebras $A$ of dimension $n$. More precisely, every such prehomegneneous module is isomorphic to the $G(A)$-module $A$.
\end{proposition}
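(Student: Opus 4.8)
The plan is to build the correspondence in one direction and then reduce its bijectivity to the projective Hassett--Tschinkel correspondence recalled above. To a local algebra $A$ of dimension $n$ with maximal ideal $\mmm$ I assign the $G(A)$-module $A$, where $G(A)=\KK^{\times}\oplus\mmm=\GG_m\times(1+\mmm)$ acts by multiplication. Here the factor $\GG_m=\KK^{\times}$ acts by scalars and $1+\mmm=\exp(\mmm)$ is isomorphic to $\GG_a^{n-1}$, so $G(A)\cong\GG_m\times\GG_a^{n-1}$; the orbit of the unit is the set of invertible elements, which is open and dense, so $A$ is a prehomogeneous $G(A)$-module, and the action is faithful. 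An algebra isomorphism $A\to A'$ is a linear map intertwining the multiplications, hence it identifies the two module structures and carries $G(A)$ onto $G(A')$. Thus $A\mapsto A$ descends to a well-defined map $\Phi$ from isomorphy classes of local algebras of dimension $n$ to equivalence classes of prehomogeneous $\GG_m\times\GG_a^{n-1}$-modules with $\GG_m$ acting by scalars, and it remains to prove that $\Phi$ is a bijection.

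The next step is projectivization. Let $V$ be a prehomogeneous module of $G=\GG_m\times U$, where $U=\GG_a^{n-1}$ and $\GG_m$ acts by scalars. Since scalars act trivially on $\PP(V)\cong\PP^{n-1}$, the $G$-action descends to a $U$-action there. By Lemma~\ref{lemdim} we have $\dim V=n$, and the open $G$-orbit $\OOO=\GG_m\cdot U\cdot v_0$ of a point $v_0$ maps under $V\setminus\{0\}\to\PP(V)$ onto the single $U$-orbit of $[v_0]$, which is therefore open. This $U$-action is effective: an element $u\in U$ acting trivially on $\PP(V)$ acts on $V$ by a scalar, and being unipotent it acts as the identity, so $u=e$ by faithfulness on $V$. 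Hence projectivization defines a map $\Psi$ from prehomogeneous $G$-modules to effective $U$-actions on $\PP^{n-1}$ with an open orbit; denote by $\mathrm{HT}$ the Hassett--Tschinkel bijection from the latter to local algebras of dimension $n$. By construction $\Psi(\Phi(A))$ is exactly the Hassett--Tschinkel datum of $A$, so $\mathrm{HT}\circ\Psi\circ\Phi$ is the identity, and since $\Psi$ sends equivalent modules to equivalent projective actions, injectivity of $\Phi$ follows from $\mathrm{HT}$ being a bijection.

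Surjectivity of $\Phi$, which I expect to be the only real difficulty, amounts to lifting a projective equivalence to a linear one. Given $V$, set $A=\mathrm{HT}(\Psi(V))$; then $\Psi(V)$ and $\Psi(\Phi(A))$ are equivalent, so some isomorphism $\PP(V)\to\PP(A)$ intertwines the $U$-actions. As every isomorphism of projective spaces is induced by a linear isomorphism, unique up to a scalar, we obtain a linear isomorphism $\phi\colon V\to A$. For each $u\in U$ the maps $\phi^{-1}\circ u\circ\phi$ and $u$ on $V$ induce the same transformation of $\PP(V)$, hence differ by a scalar $\chi(u)\in\GG_m$; comparing the two expressions for $\chi(u_1u_2)$ shows that $\chi\colon U\to\GG_m$ is an algebraic character. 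The crucial point is that $\Hom(\GG_a^{n-1},\GG_m)$ is trivial, a unipotent group having no nontrivial characters; therefore $\chi\equiv 1$ and $\phi$ is $U$-equivariant. Being a linear map, $\phi$ is automatically $\GG_m$-equivariant, so $\phi$ is a $G$-module isomorphism $V\cong A$, which proves surjectivity and identifies $V$ with the $G(A)$-module $A$. (When $n=1$ the projective space is a point and the statement is immediate.) Everything outside this lifting step is a formal consequence of the Hassett--Tschinkel bijection and the absence of characters on unipotent groups.
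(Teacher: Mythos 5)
Your proof is correct and follows the route the paper intends: the paper gives no argument of its own for Proposition~\ref{propht}, presenting it as a reformulation of the Hassett--Tschinkel correspondence with a bare citation of \cite[Theorem~2.14]{HT} and \cite[Theorem~1.1]{AS}. Your projectivization-plus-lifting argument, with the key point that $\GG_a^{n-1}$ admits no nontrivial characters so that the projective equivalence lifts to a linear $G$-equivariant one, is exactly the derivation that citation is standing in for.
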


\begin{example}
Let us illustrate the bijection of Proposition~\ref{propht} for the local algebra $A=\KK[x_1,x_2]/(x_1x_2,x_1^3-x_2^3). $
Take a basis $\{1,x_1,x_2,x_1^2,x_2^2,x_1^3\}$ in $A$. The exponent
$$
\exp(\alpha_1x_1+\alpha_2x_2+\alpha_3x_1^2+\alpha_4x_2^2+\alpha_5x_1^3)
$$
is equal to
$$
1+\alpha_1x_1+\alpha_2x_2+\alpha_3x_1^2+\alpha_4x_2^2+\alpha_5x_1^3+\frac{1}{2}(\alpha_1^2x_1^2+\alpha_2^2x_2^2+2(\alpha_1\alpha_3+\alpha_2\alpha_4)x_1^3)+\frac{1}{6}(\alpha_1^3+\alpha_2^3)x_1^3.
$$
Multiplyng all basis vectors by this element, we obtain an explicit matrix form for the corresponding 6-dimensional prehomogeneous $(\GG_m\times\GG_a^5)$-module:

\smallskip

$$
\begin{pmatrix}
\lambda  & 0 & 0 & 0 & 0 & 0 \\
\lambda\alpha_1 & \lambda  & 0 & 0 & 0 & 0 \\
\lambda\alpha_2 & 0 & \lambda  & 0 & 0 & 0 \\
\lambda(\alpha_3+\frac{\alpha_1^2}{2}) & \lambda \alpha_1 & 0 & \lambda  & 0 & 0 \\
\lambda(\alpha_4+\frac{\alpha_2^2}{2}) & 0 & \lambda \alpha_2 & 0 & \lambda  & 0 \\
\lambda (\alpha_5+\alpha_1\alpha_3+\alpha_2\alpha_4+\frac{\alpha_1^3+\alpha_2^3}{6}) & \lambda (\alpha_3+\frac{\alpha_1^2}{2}) & \lambda(\alpha_4+\frac{\alpha_2^2}{2}) & \lambda\alpha_1 & \lambda\alpha_2 & \lambda
\end{pmatrix}
$$

\smallskip

\noindent with $\lambda\in\GG_m$ and  $\alpha_1,\ldots,\alpha_5\in\GG_a$.
\end{example}

The following result is explained in \cite[Section~3]{HT} using a classification of commuting nilpotent matrices from~\cite{ST}.

\begin{proposition} \label{le6}
The number of isomorphy classes of local algebras of dimension $n$ is finite if and only if $n\le6$.
\end{proposition}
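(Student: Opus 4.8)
The plan is to prove the two implications separately, the substantial content lying in the ``only if'' direction. For a local algebra $A=\KK\oplus\mmm$ the radical filtration $\mmm\supset\mmm^2\supset\mmm^3\supset\ldots$ is preserved by every algebra isomorphism, so for the purpose of producing a family it suffices to work in the graded situation $\mmm^3=0$. After choosing a vector space splitting, such an algebra is determined by the spaces $V=\mmm/\mmm^2$ and $W=\mmm^2$ together with the surjective symmetric bilinear multiplication map $\mu\colon V\times V\to W$, and two of them are isomorphic if and only if the corresponding maps $\mu$ are equivalent under the natural action of $\GL(V)\times\GL(W)$. Indeed, an abstract isomorphism must respect the filtration, hence induces linear maps on $V$ and $W$ intertwining the two copies of $\mu$, and conversely any such compatible pair extends to an algebra isomorphism once $\mmm^3=0$.

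To locate the threshold I would carry out a dimension count. Writing $d=\dim V$ and $k=\dim W$, so that $\dim A=1+d+k=n$, the space $\Hom(\mathrm{Sym}^2V,W)$ of symmetric bilinear maps has dimension $\binom{d+1}{2}\,k$, while $\GL(V)\times\GL(W)$ has dimension $d^2+k^2$; since the one-parameter subgroup $\{(t\,\mathrm{id}_V,\,t^2\,\mathrm{id}_W)\}$ acts trivially, the effective dimension of the group is $d^2+k^2-1$. A continuous family of pairwise non-isomorphic algebras must occur once
$$
\binom{d+1}{2}\,k \;>\; d^2+k^2-1 .
$$
Minimizing $n=d+k+1$ subject to this inequality, one checks that it fails for every admissible pair with $d+k\le 5$ (the extremal case being $(d,k)=(3,2)$, where the two sides are equal) and first holds at $d+k=6$, for instance for $(d,k)=(4,2)$, which gives $n=7$.

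To realize the ``only if'' direction concretely I would take $k=2$ and $d=n-3\ge 4$, so that $\mu$ amounts to a pencil of quadratic forms on the $d$-dimensional space $V$, spanned by symmetric forms $Q_1,Q_2$ with $Q_1$ nondegenerate. The binary form $\det(sQ_1+tQ_2)$ then has degree $d\ge 4$, and its roots form a configuration of $d$ points on the projective line $\PP^1$ parametrizing the pencil. A change of basis in $V$ multiplies this form by a nonzero scalar and hence fixes the configuration, whereas a change of basis in $W$ acts on $\PP^1$ by a projective transformation. Consequently the cross-ratio of any four of the points is an invariant of the $\GL(V)\times\GL(W)$-orbit; letting it vary yields a continuum of pairwise non-isomorphic local algebras of dimension $n$, for every $n\ge 7$.

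The remaining, and I expect hardest, point is the finiteness statement for $n\le 6$. Here the dimension count above is only a heuristic: one must also handle local algebras with $\mmm^3\neq0$ and, crucially, rule out any continuous parameter in the boundary case $n=6$, where the relevant pencil consists of only three points in $\PP^1$ and so carries no modulus. For this I would appeal to the explicit finite list of all local algebras of dimension at most $6$, equivalently to the classification of the associated commuting nilpotent matrices, the absence of continuous parameters being precisely the delicate boundary verification that this low-dimensional classification settles.
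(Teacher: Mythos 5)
Your proposal is correct and follows essentially the same route as the paper: for $n\le 6$ both rely on the explicit classification of local algebras (Suprunenko--Tyshkevich / Mazzola), and for $n\ge 7$ the paper simply cites the continuous family constructed by Hassett--Tschinkel, which is precisely the family you build explicitly (algebras with $\mmm^3=0$, $\dim\mmm^2=2$, distinguished by the cross-ratio of the roots of $\det(sQ_1+tQ_2)$). The only difference is that you unpack that reference, including the correct observation that any isomorphism preserves the radical filtration and hence reduces the problem to $\GL(V)\times\GL(W)$-equivalence of the multiplication maps.
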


\begin{proof}
A classification of local algebras of dimension $n\le 6$ can be extracted from~\cite[pages~136-150]{ST}. Explicitly it is given
in~\cite[Section~2]{Ma}. We reproduce this classification below.

For every $n\ge 7$ a continuous family of pairwise non-isomorphic local algebras of dimension $n$ is constructed in~\cite[Section~3]{HT};
see~\cite[Example~3.6]{HT} and the text preceding this example.
\end{proof}

In the following table we list all local algebras of dimension up to $6$. The classification is taken from~\cite[Section~2]{Ma}.

\begin{longtable}{|c|c|c|}
\hline
No. & $\dim A$ & Local algebra $A$ \\
\hline
1& $1$ & $\KK$ \\
\hline
2& $2$ & $\KK[x_1]/(x_1^2)$ \\
\hline
3& $3$ & $\KK[x_1]/(x_1^3)$ \\
\hline
4& $3$ & $\KK[x_1,x_2]/(x_1^2,x_2^2,x_1x_2)$ \\
\hline
5& $4$ & $\KK[x_1]/(x_1^4)$ \\
\hline
6& $4$ & $\KK[x_1,x_2]/(x_1^2,x_2^2)$ \\
\hline
7& $4$ & $\KK[x_1,x_2]/(x_1^3,x_1x_2,x_2^2)$ \\
\hline
8& $4$ & $\KK[x_1,x_2,x_3]/(x_i^2,x_ix_j)$ \\
\hline
9& $5$ & $\KK[x_1]/(x_1^5)$ \\
\hline
10& $5$ & $\KK[x_1,x_2]/(x_1x_2,x_1^3-x_2^2)$ \\
\hline
11& $5$ & $\KK[x_1,x_2]/(x_1^3,x_2^3,x_1x_2)$ \\
\hline
12& $5$ & $\KK[x_1,x_2]/(x_1^4,x_2^2,x_1x_2)$ \\
\hline
13& $5$ & $\KK[x_1,x_2]/(x_1^3,x_2^2,x_1^2x_2)$ \\
\hline
14& $5$ & $\KK[x_1,x_2,x_3]/(x_1x_2,x_1x_3,x_2x_3,x_1^2-x_2^2,x_1^2-x_3^2)$ \\
\hline
15& $5$ & $\KK[x_1,x_2,x_3]/(x_1^2,x_1x_2,x_1x_3,x_2x_3,x_2^2-x_3^2)$ \\
\hline
16& $5$ & $\KK[x_1,x_2,x_3]/(x_1^3,x_2^2,x_3^2,x_1x_2,x_1x_3,x_2x_3)$ \\
\hline
17& $5$ & $\KK[x_1,x_2,x_3,x_4]/(x_i^2,x_ix_j)$ \\
\hline
18& $6$ & $\KK[x_1]/(x_1^6)$ \\
\hline
19& $6$ & $\KK[x_1,x_2]/(x_1x_2,x_1^4-x_2^2)$ \\
\hline
20& $6$ & $\KK[x_1,x_2]/(x_1x_2,x_1^3-x_2^3)$ \\
\hline
21& $6$ & $\KK[x_1,x_2]/(x_1^3,x_2^2)$ \\
\hline
22& $6$ & $\KK[x_1,x_2]/(x_1^5,x_1x_2,x_2^2)$ \\
\hline
23& $6$ & $\KK[x_1,x_2]/(x_1^4,x_1x_2,x_2^3)$ \\
\hline
24& $6$ & $\KK[x_1,x_2]/(x_1^3,x_1^2x_2,x_1x_2^2,x_2^3)$ \\
\hline
25& $6$ & $\KK[x_1,x_2]/(x_1^4,x_1^2x_2,x_1^3-x_2^2)$ \\
\hline
26& $6$ & $\KK[x_1,x_2]/(x_1^4,x_1^2x_2,x_2^2)$ \\
\hline
27& $6$ & $\KK[x_1,x_2,x_3]/(x_1^2,x_2^2,x_3^2,x_1x_2-x_1x_3)$ \\
\hline
28& $6$ & $\KK[x_1,x_2,x_3]/(x_2^2,x_3^2,x_1x_2,x_1^2-x_2x_3)$ \\
\hline
29& $6$ & $\KK[x_1,x_2,x_3]/(x_1^2,x_2^2,x_3^2,x_2x_3)$ \\
\hline
30& $6$ & $\KK[x_1,x_2,x_3]/(x_1^2,x_2^2,x_1x_3,x_2x_3,x_1x_2-x_3^3)$ \\
\hline
31& $6$ & $\KK[x_1,x_2,x_3]/(x_1^2-x_3^3,x_2^2,x_1x_2,x_1x_3,x_2x_3)$ \\
\hline
32& $6$ & $\KK[x_1,x_2,x_3]/(x_1^3,x_2^2,x_3^2,x_1x_2,x_1x_3)$ \\
\hline
33& $6$ & $\KK[x_1,x_2,x_3]/(x_1^2,x_2^2,x_3^2,x_1x_2-x_1x_3-x_2x_3)$ \\
\hline
34& $6$ & $\KK[x_1,x_2,x_3]/(x_1^3,x_2^2,x_1x_3,x_2x_3,x_1x_2-x_3^2)$ \\
\hline
35& $6$ & $\KK[x_1,x_2,x_3]/(x_1^4,x_2^2,x_3^2,x_1x_2,x_1x_3,x_2x_3)$ \\
\hline
36& $6$ & $\KK[x_1,x_2,x_3]/(x_1^3,x_2^3,x_3^2,x_1x_2,x_1x_3,x_2x_3)$ \\
\hline
37& $6$ & $\KK[x_1,x_2,x_3]/(x_1^3,x_2^2,x_3^2,x_1^2x_2,x_1x_3,x_2x_3)$ \\
\hline
38& $6$ & $\KK[x_1,x_2,x_3,x_4]/(x_i^2,x_1x_2,x_1x_3,x_2x_4,x_3x_4,x_1x_4-x_2x_3)$ \\
\hline
39& $6$ & $\KK[x_1,x_2,x_3,x_4]/(x_1^2,x_2^2,x_4^2,x_1x_3,x_1x_4,x_2x_3,x_2x_4,x_1x_2-x_3^2)$ \\
\hline
40& $6$ & $\KK[x_1,x_2,x_3,x_4]/(x_i^2,x_1x_3,x_1x_4,x_2x_3,x_2x_4,x_3x_4)$ \\
\hline
41& $6$ & $\KK[x_1,x_2,x_3,x_4]/(x_1^3,x_2^2,x_3^2,x_4^2,x_ix_j,i\ne j)$ \\
\hline
42& $6$ & $\KK[x_1,x_2,x_3,x_4,x_5]/(x_i^2,x_ix_j)$ \\
\hline
\end{longtable}

\section{Prehomogeneous and cyclic modules}
\label{sec4}

In this section we show that every prehomogeneous module $V$ of a commutative linear algebraic group $G$ comes from a finite dimensional  commutative associative algebra $A$ with unit and discuss some corollaries of this result.

\begin{theorem} \label{tmain}
Let $G$ be a commutative linear algebraic group and $V$ a prehomogeneous $G$-module. Then there exists a finite dimensional commutative associative algebra $A$ with unit such that the $G$-module $V$ is isomorphic to the $G(A)$-module $A$. Moreover,
two prehomogeneous modules are equivalent if and only if the corresponding algebras are isomorphic.
\end{theorem}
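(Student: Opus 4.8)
The plan is to realize the required algebra intrinsically as the centralizer of $G$ inside the full endomorphism algebra $\mathrm{End}(V)$. Let $\OOO\subseteq V$ be the open $G$-orbit, which is free by the computation in Lemma~\ref{lemdim}, and fix a base point $v_0\in\OOO$. Set $A$ to be the centralizer of $G$ in the associative unital algebra $\mathrm{End}(V)$. Since $G$ is commutative we have $G\subseteq A$, and of course $\mathrm{id}_V\in A$. The decisive input is Proposition~\ref{norm}: the centralizer of $G$ in $\GL(V)$ equals $G$. Now an element $a\in A$ is invertible in the algebra $A$ if and only if the left-multiplication operator is bijective, i.e.\ if and only if $a$ is invertible as an operator on $V$; and the operator inverse of an element commuting with $G$ again commutes with $G$, so it lies in $A$. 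Hence $G(A)=A\cap\GL(V)$, which by Proposition~\ref{norm} is exactly $G$. Thus $G=G(A)$.

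Next I would check that $A$ is commutative. The unit group $G(A)$ is the nonvanishing locus of the determinant of the left-multiplication operator, a nonzero polynomial on $A$ (nonzero at $\mathrm{id}_V$), so $G(A)$ is a nonempty principal open subset of the irreducible affine space $A$, hence dense. The commutator morphism $(x,y)\mapsto xy-yx$ on $A\times A$ vanishes on the dense subset $G(A)\times G(A)=G\times G$, on whose points the factors commute because $G$ is a commutative group; therefore it vanishes identically and $A$ is commutative. This gives a finite dimensional commutative associative algebra with unit whose group of invertible elements is $G$.

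To identify the $G$-module $V$ with the $G(A)$-module $A$, I would use the evaluation map $\mathrm{ev}\colon A\to V$, $a\mapsto a\cdot v_0$. It is $G$-equivariant, intertwining multiplication by $g\in G=G(A)$ on $A$ with the given action on $V$, because $(ga)v_0=g(av_0)$ as a composition of operators. Its image is a linear subspace containing $\OOO=G\cdot v_0$, hence all of $V$ by density; and if $av_0=0$, then $a(gv_0)=g(av_0)=0$ for every $g\in G$, so $a$ annihilates the spanning set $\OOO$ and $a=0$. Thus $\mathrm{ev}$ is an isomorphism of $G$-modules sending $\mathrm{id}_V$ to $v_0$, proving the first assertion.

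For the final clause, the key observation is that the construction $V\mapsto A$ is canonical: $A$ is the centralizer of $G$ in $\mathrm{End}(V)$. An equivalence of prehomogeneous modules is a linear isomorphism $V\to V'$ whose induced isomorphism $\mathrm{End}(V)\to\mathrm{End}(V')$ carries $G$ to $G'$; it then carries centralizer to centralizer and restricts to an algebra isomorphism $A\to A'$. Conversely, an algebra isomorphism $A\to A'$ is a linear isomorphism carrying $G(A)$ onto $G(A')$ and intertwining the multiplication actions, hence an equivalence of the associated modules. I expect the only genuinely delicate point to be the commutativity step, namely verifying carefully that invertibility in the algebra $A$ coincides with invertibility as an operator and that the density argument for the commutator is legitimate; once $G=G(A)$ and commutativity are in hand, the evaluation isomorphism and the equivalence statement are essentially formal.
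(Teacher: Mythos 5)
Your proof is correct, but it takes a genuinely different route from the paper. The paper's argument decomposes $V$ into weight spaces for the torus factor $T$ of $G$, observes that each weight space is a prehomogeneous module for a quotient of $G$ on which $\GG_m$ acts by scalars, invokes the Hassett--Tschinkel correspondence (Proposition~\ref{propht}) to identify each piece with a local algebra $A_i$, and then reassembles $A=A_1\oplus\dots\oplus A_r$ using Lemma~\ref{lemdim} to see that $G$ is the full product $G_1\times\dots\times G_r$. You instead build the algebra globally and canonically as the commutant $A=C_{\mathrm{End}(V)}(G)$, using Proposition~\ref{norm} to get $G(A)=A\cap\GL(V)=G$, a density argument for commutativity, and the evaluation map at a point of the open orbit for the module isomorphism; all the steps check out (in particular the identification $G(A)=A\cap\GL(V)$ via the standard fact that the operator inverse of an element commuting with $G$ again commutes with $G$). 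Your route is closer in spirit to the alternative proof the paper sketches at the end of Section~\ref{sec5} via group embeddings and Proposition~\ref{propmul}, though even that argument proceeds differently (by extending the comorphism of the group multiplication rather than taking a commutant). What each approach buys: the paper's decomposition ties the result directly to the classification of local algebras, which is what drives Corollaries~\ref{corpol}, \ref{cor1} and~\ref{cor2}; your construction is more elementary and self-contained (it needs neither Proposition~\ref{propht} nor the connectedness/weight-space analysis), and because the algebra is defined intrinsically as a centralizer, the ``moreover'' clause about equivalence of modules versus isomorphism of algebras becomes essentially formal, whereas the paper has to appeal to uniqueness of the decomposition into local summands.
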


\begin{proof}
By Lemma~\ref{connected}, the group $G$ is connected. Hence $G$ is isomorphic to $T\times\GG_a^m$. Any orbit of an action a unipotent group on an affine variety is closed, see, e.g., \cite[Section~1.3]{PV}. Hence the torus T in $G$ has positive dimension.

Denote by $\XX(T)$ the lattice of characters of the torus $T$. Consider the weight decomposition of the module $V$ with respect to the torus $T$:
$$
V=\bigoplus_{\chi\in\XX(T)} V_{\chi}, \quad \text{where} \quad V_{\chi}=\{v\in V \ | \ tv=\chi(t)v \ \text{for\, all} \ t\in T\}.
$$
Each subspace $V_{\chi}$ is invariant under the group $G$ and we have induced representations $\rho_i\colon G\to\GL(V_{\chi_i})$ for all weights $\chi_1,\ldots\chi_r$ with nonzero weight subspaces. Denote the image $\rho_i(G)$ by $G_i$
and let $V_i:=V_{\chi_i}$. Then $G$ is contained in $G_1\times\ldots\times G_r$, and thus the componentwise action of  $G_1\times\ldots\times G_r$  on $V$ has an open orbit. This implies that each $G_i$ acts on $V_i$ with an open orbit.

By Proposition~\ref{propht}, there exist local algebras $A_i$ such that the $G_i$-modules $V_i$ are isomorphic to the $G(A_i)$-modules $A_i$ for all $i=1,\ldots,r$. In particular, we have $\dim G_i=\dim V_i$. By Lemma~\ref{lemdim}, the group $G$ coincides with $G_1\times\ldots\times G_r$, and thus the $G$-module $V$ is isomorphic to the $G(A)$-module $A$ for $A=A_1\oplus\ldots\oplus A_r$.

The last assertion follows from Proposition~\ref{propht} and uniqueness of decomposition of an algebra $A$ into local summands.
\end{proof}

\begin{corollary} \label{corpol}
Let $G$ be a commutative linear algebraic group. The number of isomorphy classes of prehomogeneous $G$-modules is finite if and only if the corank of $G$ is at most 5.
\end{corollary}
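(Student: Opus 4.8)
The plan is to translate the counting of prehomogeneous $G$-modules into a counting of algebras via Theorem~\ref{tmain}, and then to isolate the contribution of each local summand. By Theorem~\ref{tmain}, equivalence classes of prehomogeneous $G$-modules are in bijection with isomorphy classes of finite dimensional commutative associative unital algebras $A$ whose group of units $G(A)$ is isomorphic to $G$. So the first step is to understand, for $G=(\GG_m)^r\times(\GG_a)^s$, exactly which algebras $A$ satisfy $G(A)\cong G$.

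For the second step I would decompose $A=A_1\oplus\ldots\oplus A_r$ into local algebras with maximal ideals $\mmm_i$, as recalled in Section~\ref{sec3}. Then $G(A)=G(A_1)\times\ldots\times G(A_r)$, and for each local summand the factorization $c+m=c(1+c^{-1}m)$ (with $c\in\KK^{\times}$, $m\in\mmm_i$) gives $G(A_i)\cong\GG_m\times(1+\mmm_i)$; since $1+\mmm_i$ is a commutative unipotent group of dimension $\dim\mmm_i$, it is isomorphic to $(\GG_a)^{\dim\mmm_i}$. Hence
$$
G(A)\cong(\GG_m)^{r}\times(\GG_a)^{s},\qquad s=\sum_{i=1}^r\dim\mmm_i=\dim A-r .
$$
Reading off the rank and the corank, the condition $G(A)\cong G$ becomes: exactly $r$ local summands, with $\sum_{i=1}^r(\dim A_i-1)=s$. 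Thus a prehomogeneous $G$-module is the same datum as an unordered collection of $r$ local algebras whose maximal ideals have dimensions summing to~$s$.

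The finiteness direction is then immediate. If $s\le 5$, then each $\dim A_i-1\le s\le 5$, so every local summand has dimension at most $6$; by Proposition~\ref{le6} there are only finitely many such local algebras, and only finitely many ways to distribute the corank among the $r$ summands, so the total number of modules is finite. For the converse, assume $s\ge 6$ (and $r\ge 1$, which holds whenever $G$ admits a prehomogeneous module). Here I would exhibit an infinite family directly: take $A=A_1\oplus\KK^{\,r-1}$ where $A_1$ runs over the local algebras of dimension $s+1\ge 7$. By Proposition~\ref{le6} there are infinitely many pairwise non-isomorphic such $A_1$, hence infinitely many pairwise non-isomorphic $A$, and by Theorem~\ref{tmain} infinitely many inequivalent prehomogeneous $G$-modules.

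The main point to get right is the bookkeeping of the middle paragraph: one must check that $G(A)\cong G$ pins down both the number of local factors (the rank $r$) and the total dimension of the nilpotent radicals (the corank $s=\dim A-r$), and in particular that the whole corank may be concentrated in a single local factor, forcing a local algebra of dimension $s+1$. Once this is in place, the statement reduces entirely to Proposition~\ref{le6}, whose threshold $6$ on the dimension of a local algebra becomes precisely the threshold $5$ on the corank. (The degenerate case $r=0$ is harmless: then $G$ is unipotent and a linear action fixes the origin, so $G$ admits no prehomogeneous module and the count is zero; the asserted equivalence is understood for groups of positive rank.)
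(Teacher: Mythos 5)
Your proof is correct and takes essentially the same route as the paper's: both reduce the count to Proposition~\ref{le6} via Theorem~\ref{tmain}, observe that each local summand has dimension at most the corank plus one, and note that this bound is attained by concentrating the whole corank in a single local factor with all other summands equal to $\KK$. Your version merely spells out the bookkeeping (the computation of $G(A)$ and the degenerate rank-zero case) that the paper leaves implicit.
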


\begin{proof}
By Proposition~\ref{le6}, the number of isomorphy classes of finite dimensional algebras $A$ with fixed dimensions of local summands is finite if and only if the dimensions of local summands do not exeed $6$. In our case the dimensions of local summands
do not exceed the corank of $G$ plus $1$, and this value is achieved when all local summands of $A$ except one are $\KK$.
\end{proof}

\begin{remark}
Recall that two elements $a$ and $b$ of an algebra $A$ are \emph{associated} if there exists an element $c\in G(A)$ such that $a=bc$. Theorem~\ref{tmain} implies that $G$-orbits in a prehomogeneous $G$-module $V$ are in bijection with
association classes in the corresponding algebra $A$.
\end{remark}

For positive integers $n$ and $r$, we denote by $p_r(n)$ the number of partitions $n=n_1+\ldots+n_r$ with $n_1\ge\ldots\ge n_r\ge 1$.

\begin{corollary} \label{cor1}
Let $G$ be a commutative linear algebraic group of dimension $n$ and rank~$r$. Then there exist precisely $p_r(n)$ prehomogeneous $G$-modules with a finite number of $G$-orbits. The corresponding algebras $A$ are precisely the algebras
of the form $\KK[x]/(f(x))$, where $f(x$) is a polynomial of degree $n$ with $r$ distinct roots.
\end{corollary}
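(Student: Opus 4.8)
The plan is to translate the statement, via Theorem~\ref{tmain} and the preceding Remark, into a counting problem about finite dimensional algebras. By Theorem~\ref{tmain}, prehomogeneous $G$-modules correspond to algebras $A$ with $\dim A=\dim G=n$ whose number of local summands $A=A_1\oplus\ldots\oplus A_r$ equals the rank $r$ of $G$; the preceding Remark identifies $G$-orbits in $V$ with association classes in $A$, that is, with orbits of $G(A)$ acting on $A$ by multiplication. Since $a=(a_1,\ldots,a_r)$ is associated to $b=(b_1,\ldots,b_r)$ if and only if $a_i$ is associated to $b_i$ in $A_i$ for each $i$, the set of association classes of $A$ is the product of those of the summands, so $A$ has finitely many association classes if and only if each local summand $A_i$ does. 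Thus I must determine which local algebras have only finitely many association classes, and then count.

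The heart of the argument, and the main obstacle, is the claim that a local algebra $A_i$ with maximal ideal $\mmm_i$ has finitely many association classes if and only if $A_i\cong\KK[x]/(x^{n_i})$, equivalently $\dim_\KK(\mmm_i/\mmm_i^2)\le 1$. For the easy direction, in $R=\KK[x]/(x^{d})$ every nonzero element can be written as $x^{j}u$ with $u$ a unit, hence is associated to $x^{j}$; together with $0$ this gives the $d+1$ classes $0,1,x,\ldots,x^{d-1}$. For the converse, suppose $\dim_\KK(\mmm_i/\mmm_i^2)\ge 2$. If $a=bc$ with $c=c_0+m$ a unit ($c_0\in\KK^{\times}$, $m\in\mmm_i$) and $a,b\in\mmm_i$, then reducing modulo $\mmm_i^2$ gives $\bar a=c_0\bar b$, because $bm\in\mmm_i^2$; hence associated elements of $\mmm_i$ have proportional images in $\mmm_i/\mmm_i^2$. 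Choosing $v_1,v_2\in\mmm_i$ with linearly independent images, the elements $v_1+tv_2$, $t\in\KK$, have pairwise non-proportional images, so they lie in pairwise distinct association classes and there are infinitely many. This forces $\dim_\KK(\mmm_i/\mmm_i^2)\le 1$, and then $\mmm_i$ is principal and nilpotent, so $A_i\cong\KK[x]/(x^{n_i})$.

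It remains to assemble and count. By the above, a prehomogeneous $G$-module has finitely many $G$-orbits precisely when its algebra is $A\cong\prod_{i=1}^r\KK[x]/(x^{n_i})$ with $n_i\ge 1$ and $\sum_{i=1}^r n_i=n$. By uniqueness of the local decomposition, two such algebras are isomorphic exactly when they share the same multiset $\{n_1,\ldots,n_r\}$, so by Theorem~\ref{tmain} the equivalence classes of these modules are indexed by partitions $n=n_1+\ldots+n_r$ with $n_1\ge\ldots\ge n_r\ge 1$, and there are $p_r(n)$ of them. Finally, choosing distinct scalars $\lambda_1,\ldots,\lambda_r\in\KK$ and setting $f(x)=\prod_{i=1}^r(x-\lambda_i)^{n_i}$, the Chinese Remainder Theorem yields $\KK[x]/(f(x))\cong\prod_{i=1}^r\KK[x]/(x^{n_i})$, where $f$ has degree $n$ and exactly $r$ distinct roots; conversely every such $f$ produces an algebra of this shape. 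This identifies the algebras as claimed and completes the count.
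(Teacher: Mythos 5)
Your proof is correct, and its skeleton coincides with the paper's: reduce to the local summands $A_i$, characterize the local algebras with finitely many association classes as $\KK[x]/(x^{n_i})$, and count partitions. The difference lies in how the key local step is handled. The paper simply cites \cite[Proposition~3.7]{HT} for the statement that a local algebra $A_i$ has finitely many $G(A_i)$-orbits if and only if $A_i\cong\KK[x]/(x^{n_i})$, whereas you prove it from scratch: the observation that association of elements of $\mmm_i$ forces proportionality in $\mmm_i/\mmm_i^2$ (since $bm\in\mmm_i^2$), combined with the pencil $v_1+tv_2$, cleanly rules out $\dim\mmm_i/\mmm_i^2\ge 2$, and Nakayama then gives $\mmm_i$ principal, hence $A_i\cong\KK[x]/(x^{n_i})$. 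This makes your argument self-contained where the paper's is not. You also supply the Chinese Remainder Theorem step identifying $\bigoplus_i\KK[x]/(x^{n_i})$ with $\KK[x]/(f(x))$ for $f$ of degree $n$ with $r$ distinct roots; the paper's proof asserts this description in the statement but does not argue it. Both routes are valid; yours trades brevity for completeness.
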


\begin{proof}
The number of $G(A)$-orbits in $A$ is finite if and only if the number of $G(A_i)$-orbits in $A_i$ is finite for every local summand $A_1,\ldots,A_r$ in $A$. By~\cite[Proposition~3.7]{HT}, the number of $G(A_i)$-orbits in $A_i$ is finite
if and only if the algebra $A_i$ is isomorphic to $\KK[x]/(x^{n_i})$. It shows that the algebra $A$ is uniquely determined by dimensions $n_1,\ldots,n_r$ of its local summands.
\end{proof}

\begin{remark}
A classification of irreducible prehomogeneous modules with finitely many orbits is obtained in~\cite[Theorem~8]{SK}.
\end{remark}

\begin{corollary} \label{cor2}
Let $G$ be a commutative linear algebraic group of dimension $n$ and rank~$r$. Then there exist precisely $p_r(n)$ prehomogeneous $G$-modules $V$ such that the group $G$ is normalized by the group of all invertible diagonal matrices on $V$.  The corresponding algebras $A$ are precisely the algebras with $a^2=0$ for every nilpotent $a\in A$.
\end{corollary}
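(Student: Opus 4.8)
The plan is to use Theorem~\ref{tmain} to replace the module $V$ by the regular $G(A)$-module $A$ of a finite dimensional commutative algebra $A$, and then to translate the normalization condition into a purely algebraic statement about $A$. Since prehomogeneous modules are counted up to equivalence, and equivalence is realized by a change of basis which conjugates the diagonal torus into another maximal torus, the condition that $G$ be normalized by all invertible diagonal matrices is the same as: there is a basis of $V$ in which $G=G(A)$ is normalized by the diagonal matrices, i.e. $G$ is normalized by some maximal torus $D\cong(\KK^\times)^{\dim V}$ of $\GL(V)$. Passing to Lie algebras, $D$ normalizes the connected group $G$ if and only if $\mathfrak g:=\lie(G)$ is stable under conjugation by $D$. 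Here $\mathfrak g$ is exactly the algebra of left multiplications $L_A=\{L_a:a\in A\}\subseteq\mathrm{End}(A)$, and it is commutative because $A$ is. The first step is therefore the observation that, as $D$ is a maximal torus, its weight spaces on $\mathrm{End}(A)$ are the diagonal subalgebra together with the one-dimensional spaces $\KK E_{ij}$ spanned by off-diagonal matrix units; hence every $D$-stable subspace is \emph{monomial}, namely a sum of a subspace of diagonal matrices and some of the $\KK E_{ij}$.

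For the easy implication I would assume $\mmm_\alpha^2=0$ for each local summand and exhibit the monomial basis directly. Writing $A=A_1\oplus\dots\oplus A_r$ with $A_\alpha=\KK 1_\alpha\oplus\mmm_\alpha$, I take the basis consisting of the units $1_\alpha$ together with bases of the $\mmm_\alpha$. Then $L_{1_\alpha}$ is the diagonal projector onto $A_\alpha$, while for a basis vector $m\in\mmm_\alpha$ the relation $\mmm_\alpha^2=0$ together with orthogonality of the $1_\beta$ forces $L_m$ to send $1_\alpha$ to $m$ and to annihilate every other basis vector, so $L_m$ is a single matrix unit. Thus $\mathfrak g=L_A$ is monomial and $D$ normalizes $G(A)$.

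The converse is the crux. Suppose $D$ stabilizes $\mathfrak g=L_A$ in some basis. The nilradical corresponds to $L_{\mathrm{nil}(A)}$, which is again $D$-stable and contains no nonzero diagonal matrix, since a diagonal nilpotent is zero; hence $L_{\mathrm{nil}(A)}=\bigoplus_{(i,j)\in S}\KK E_{ij}$ for some set $S$ of off-diagonal positions. Now $a^2=0$ for every nilpotent $a$ is equivalent, in characteristic zero, to $\mathrm{nil}(A)^2=0$, i.e. to $L_{\mathrm{nil}(A)}^2=0$, i.e. to the nonexistence of a composable pair $(i,j),(j,l)\in S$. The key point is that such a pair cannot occur: it would place two matrix units $E_{ij}$ and $E_{jl}$ inside $\mathfrak g$, and these do not commute, their commutator being $E_{il}$ when $i\ne l$ and $E_{ii}-E_{jj}$ when $i=l$, contradicting the commutativity of $\mathfrak g=L_A$. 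This commutativity trick is what makes the argument work, and I expect pinning it down to be the main obstacle; once it is in place, $\mathrm{nil}(A)^2=0$ and hence $\mmm_\alpha^2=0$ for every summand follow immediately.

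Finally I would count. A local algebra with $\mmm_\alpha^2=0$ is isomorphic to $\KK[x_1,\dots,x_{n_\alpha-1}]/(x_ix_j)$ and is determined by its dimension $n_\alpha$; therefore an algebra $A$ of the required type is determined, up to isomorphism, by the unordered tuple of dimensions of its local summands, that is, by a partition $n=n_1+\dots+n_r$ into $r$ positive parts. Each such partition yields $G(A)\cong(\KK^\times)^r\times\GG_a^{\,n-r}\cong G$, so by Theorem~\ref{tmain} these algebras account for exactly $p_r(n)$ pairwise inequivalent prehomogeneous $G$-modules, which is the asserted count and description.
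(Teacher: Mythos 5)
Your proof is correct, but it takes a genuinely different route from the paper's. The paper's argument is short and essentially citation-based: it reduces to a single local summand (the normalization condition holds for $A$ iff it holds for each $A_i$) and then invokes \cite[Theorem~3.6]{AR} --- any two additive actions on a toric variety normalized by the maximal torus are isomorphic --- together with \cite[Example~6.1]{AR} and \cite[Proposition~2.7]{AS} to identify the unique local algebra in each dimension as $\KK\oplus\mmm$ with $\mmm^2=0$. You instead argue directly inside $\mathrm{End}(A)$: the weight spaces of the diagonal torus $D$ are the diagonal subalgebra and the lines $\KK E_{ij}$, so any $D$-stable subspace is monomial; hence $L_{\mathrm{nil}(A)}\subseteq \lie(G(A))=L_A$ is a span of off-diagonal matrix units, and commutativity of $L_A$ forbids composable pairs $E_{ij},E_{jl}$ (their commutator being $E_{il}$ or $E_{ii}-E_{jj}$), which forces $\mathrm{nil}(A)^2=0$; the converse is the explicit monomial basis $\{1_\alpha\}\cup\bigcup_\alpha\mmm_\alpha$. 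What your approach buys is a self-contained, purely linear-algebraic proof that avoids the toric machinery of \cite{AR} and even dispenses with the reduction to local summands; what it costs is length relative to the paper's two-line citation. If you write it up, spell out two small points: (i) $L_{\mathrm{nil}(A)}$ is $D$-stable because conjugation by $d\in D$ is an algebra automorphism of $\mathrm{End}(A)$ preserving $L_A$, hence preserving its nilradical; (ii) the equivalence between ``$D$ normalizes $G(A)$'' and ``$D$ stabilizes $L_A$'' follows either via the Lie algebra of the connected group $G(A)$, as you do, or even more directly from the fact that $L_A$ is the closure of $G(A)$ in $\mathrm{End}(A)$.
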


\begin{proof}
Clearly, the action of $G(A)$ on $A$ is normalized by the group of diagonal matrices if and only if it holds for every local summand of $A$. Thus it suffices to prove that in every dimension there exists a unique local algebra $A$ such that
the action of $G(A)$ on $A$ is normilized by all diagonal matrices.

In~\cite{AR}, actions with an open orbit of the group $\GG_a^m$ on toric varieties are studied. It is shown in~\cite[Theorem~3.6]{AR} that any two such actions normalized by the maximal torus are isomorphic. In the case of the projective space $\PP^m$ this unique isomorphy class of actions corresponds to the local algebra
$$
\KK[x_1,\ldots,x_{m-1}]/(x_ix_j, \ 1\le i\le j\le m-1),
$$
or, equivalently, $A=\KK\oplus\mmm$ with $\mmm^2=0$; see~\cite[Example~6.1]{AR} and \cite[Proposition~2.7]{AS}. This completes the proof.
\end{proof}

Let us recall that a regular function $f$ on a $G$-module $V$ is a semi-invariant if $f(gv)=\mu(g)f(v)$ for some character $\mu$ of the group $G$ and all $g\in G$ and $v\in V$.

\begin{corollary}
Let $G$ be a commutative linear algebraic group of rank $r$ and $V$ a prehomogeneous $G$-module.Then the complement of an open $G$-orbit in $V$ is a union of hyperplanes $H_1,\ldots,H_r$. Moreover, every semi-invariant on $V$ is a monomial in the linear functions defining the hyperplanes $H_1,\ldots,H_r$.
\end{corollary}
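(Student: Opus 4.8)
The plan is to use Theorem~\ref{tmain} to reduce everything to an explicit computation inside the algebra. By that theorem we may identify $V$ with $A=A_1\oplus\ldots\oplus A_r$ and the group $G$ with the group $G(A)=G(A_1)\times\ldots\times G(A_r)$ of invertible elements, where each $A_i=\KK\oplus\mmm_i$ is local. Since $G(A_i)=\KK^{\times}\oplus\mmm_i\cong\GG_m\times\GG_a^{\dim\mmm_i}$, the torus $T$ in $G$ is exactly $(\GG_m)^r$, so the number $r$ of local summands is indeed the rank of $G$. The open orbit is the orbit of the unit $1\in A$, namely the set $G(A)$ of invertible elements, and $a=(a_1,\ldots,a_r)$ is invertible precisely when $a_i\notin\mmm_i$ for every $i$. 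For each $i$ let $\ell_i\colon A\to A_i\to A_i/\mmm_i\cong\KK$ be the linear functional obtained by projecting to the $i$-th summand followed by the quotient map, and put $H_i=\{\ell_i=0\}$. Then $a$ fails to be invertible exactly when $\ell_i(a)=0$ for some $i$, so the complement of the open orbit equals $H_1\cup\ldots\cup H_r$; these are $r$ distinct hyperplanes since the $\ell_i$ are supported on different summands and hence linearly independent.

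For the second assertion I would first note that each $\ell_i$ is itself a semi-invariant. Indeed $\pi_i\colon A_i\to\KK$ is an algebra homomorphism, so for $g=(g_1,\ldots,g_r)\in G(A)$ one has $\ell_i(ga)=\pi_i(g_ia_i)=\pi_i(g_i)\,\ell_i(a)$, and $\mu_i\colon g\mapsto\pi_i(g_i)$ is the character of $T=(\GG_m)^r$ singling out the $i$-th factor. The characters $\mu_1,\ldots,\mu_r$ form a basis of the lattice $\XX(T)$.

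Now take an arbitrary nonzero semi-invariant $f$ with weight $\mu$. Since $\Hom(\GG_a,\GG_m)$ is trivial in characteristic zero, $\mu$ factors through $T$, so $\mu=\mu_1^{k_1}\cdots\mu_r^{k_r}$ with $k_i\in\ZZ$. Evaluating $f$ on the open orbit, for $g\in G(A)$ we obtain $f(g)=f(g\cdot 1)=\mu(g)f(1)=f(1)\prod_i\ell_i(g)^{k_i}$, using that $\ell_i(g)=\pi_i(g_i)=\mu_i(g)$. Thus the regular function $f$ and the rational function $f(1)\prod_i\ell_i^{k_i}$ agree on the dense open orbit, hence coincide as rational functions on $V$. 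The one step I expect to require real care is ruling out negative exponents: clearing denominators gives the polynomial identity $f\cdot\prod_{k_i<0}\ell_i^{-k_i}=f(1)\prod_{k_i>0}\ell_i^{k_i}$, and since the coordinate ring of $V$ is a unique factorization domain in which the $\ell_i$ are pairwise non-proportional linear forms, hence irreducible and pairwise non-associate, no factor $\ell_i$ with $k_i<0$ can divide the right-hand side. Therefore all $k_i\ge 0$ and $f=f(1)\,\ell_1^{k_1}\cdots\ell_r^{k_r}$ is a monomial in the functions $\ell_1,\ldots,\ell_r$ defining the hyperplanes $H_1,\ldots,H_r$, as claimed.
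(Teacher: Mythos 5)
Your proof is correct. The first assertion is handled exactly as in the paper: apply Theorem~\ref{tmain}, decompose $A$ into local summands $A_i=\KK\oplus\mmm_i$, and observe that the complement of the open orbit $G(A)$ is $\bigcup_i\bigl(\mmm_i\oplus\bigoplus_{j\ne i}A_j\bigr)$, a union of $r$ hyperplanes. For the second assertion the paper takes a shorter, more geometric route: the zero set of a nonzero semi-invariant is $G$-stable, hence disjoint from the open orbit, so the divisor of zeroes of $f$ is supported on $H_1\cup\ldots\cup H_r$; factoriality of $\KK[V]$ then forces every irreducible factor of $f$ to be proportional to some $\ell_i$. You instead compute the weight explicitly (any character of $G$ kills the unipotent part, so $\mu=\mu_1^{k_1}\cdots\mu_r^{k_r}$), evaluate $f$ on the open orbit via $f(g)=\mu(g)f(1)$ to get $f=f(1)\prod_i\ell_i^{k_i}$ as a rational function, and rule out negative $k_i$ by the same UFD consideration. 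The two arguments rest on the same inputs (Theorem~\ref{tmain}, the hyperplane description of the boundary, and unique factorization in $\KK[V]$); yours has the additional merit of reading off the exponents $k_i$ directly from the weight of $f$, whereas the paper's divisor-theoretic observation bypasses the character computation entirely. Both are complete.
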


\begin{proof}
The first assertion follows from the decomposition $A_i=\KK\oplus\mmm_i$ and the equality $G(A_i)=A_i\setminus\mmm_i$  for every local summand $A_i$ in $A$. For the second assertion we observe that the support of the divisor of zeroes of a semi-invariant $f$ on $V$ is contained in the complement of the open $G$-orbit, i.e., in the  union of the hyperplanes $H_1,\ldots,H_r$. This observation implies the claim.
\end{proof}

The next proposition shows that finite dimensional algebras can be used to study much wider class of modules than the class of prehomogenenous modules. Let us recall that a vector $v$ in a $G$-module $V$ is \emph{cyclic} if the linear span of the orbit $Gv$ coincides with $V$. A~$G$-module $V$ is \emph{cyclic} if it has a cyclic vector.

\begin{proposition} \label{propcyc}
Let $V$ be a cyclic module of a commutative linear algebraic group $G$. Then there exist a finite dimensional commutative associative algebra $A$ with unit and an injective homomorphism $G\to G(A)$ such that the $G$-modules $V$ and $A$ are isomorphic.
\end{proposition}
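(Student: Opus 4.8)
The plan is to mimic the strategy used for Theorem~\ref{tmain}, but to work with the linear span of the cyclic orbit rather than an open orbit, and to manufacture an algebra directly from the cyclic vector. Let $v\in V$ be a cyclic vector, so that $\langle Gv\rangle=V$. As in the proof of Theorem~\ref{tmain}, connectedness is not automatic here, but I would first pass to the identity component and handle the whole group afterwards; the key observation is that the map $g\mapsto gv$ extends to an algebra framework once we produce a suitable commutative associative multiplication on a space containing $V$.

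First I would embed $G$ into a \emph{larger} commutative group for which $v$ becomes part of an open orbit. The natural candidate is to consider the associative subalgebra $A\subseteq\mathrm{End}(V)$ generated by $G$ together with the identity. Since $G$ is commutative, $A$ is a finite dimensional commutative associative algebra with unit, and $G\subseteq G(A)$ via the tautological inclusion; this inclusion is injective because $G$ acts faithfully (being a subgroup of $\GL(V)$). The evaluation map $\mathrm{ev}_v\colon A\to V$, $a\mapsto a\cdot v$, is then an $A$-module homomorphism where $A$ acts on itself by multiplication and on $V$ through the inclusion $A\subseteq\mathrm{End}(V)$. Cyclicity of $v$ says exactly that the image of $\mathrm{ev}_v$ contains $Gv$ and hence, by linearity, all of $V$; so $\mathrm{ev}_v$ is surjective.

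The crux is to upgrade this surjection into an isomorphism of $G$-modules between $V$ and the algebra $A$ itself. Surjectivity of $\mathrm{ev}_v$ gives $\dim V\le\dim A$, and I would argue the reverse inequality by showing $\mathrm{ev}_v$ is injective, i.e.\ that $a\cdot v=0$ forces $a=0$ for $a\in A$. The main obstacle lies precisely here: a priori the cyclic vector need only span $V$ under $G$, not generate $A$ as a faithful module, so $\mathrm{ev}_v$ could have a nontrivial kernel, and then $V$ would be a proper quotient $A/I$ rather than $A$ itself. To resolve this I expect one must \emph{replace} the naive algebra generated by $G$ with the quotient $A/\mathrm{Ann}(v)$, where $\mathrm{Ann}(v)=\{a\in A\mid a\cdot v=0\}$ is the annihilator ideal. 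This quotient $\bar A:=A/\mathrm{Ann}(v)$ is again a finite dimensional commutative associative algebra with unit, $\mathrm{ev}_v$ descends to an \emph{isomorphism} $\bar A\xrightarrow{\sim}V$ of vector spaces intertwining the multiplication action, and the composite $G\hookrightarrow G(A)\to G(\bar A)$ provides the required injective homomorphism, where injectivity again follows from faithfulness of the $G$-action on $V\cong\bar A$.

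Finally I would verify that the identification $V\cong\bar A$ is genuinely an isomorphism of $G$-modules: under $\mathrm{ev}_v$, the action of $g\in G$ on $V$ corresponds to multiplication by the image of $g$ in $\bar A$, which is precisely the module structure of $\bar A$ over $G(\bar A)$ restricted along $G\to G(\bar A)$. The connectedness issue from Lemma~\ref{connected} does not arise because cyclic modules need not force $G$ connected; one simply takes $A\subseteq\mathrm{End}(V)$ for the given $G$ without passing to a component. This completes the reduction, and the principal difficulty throughout is the passage from ``spanning'' to ``faithful generation,'' which the annihilator quotient is designed to circumvent.
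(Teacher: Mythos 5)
Your argument is correct, but it follows a genuinely different route from the paper's. The paper decomposes $G$ as $L\times T\times\GG_a^m$, splits $V$ into weight spaces $V_\nu$ for the diagonalizable part $L\times T$, observes that each $V_\nu$ is again cyclic, enlarges $T$ so that it acts on each $V_\nu$ by independent scalars, and then invokes the Hassett--Tschinkel correspondence (\cite[Theorem~2.14]{HT}, \cite[Remark~1.2]{AS}) to identify each $V_\nu$ with a local algebra. You instead take $A=\Spec$-free linear algebra: the subalgebra of $\mathrm{End}(V)$ generated by $G$, which is just the linear span of $G$ and is commutative, finite dimensional, unital, and contains $G$ inside $G(A)$; the evaluation map $\mathrm{ev}_v\colon A\to V$ at a cyclic vector $v$ is surjective and $G$-equivariant. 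Your one hesitation --- that $\mathrm{ev}_v$ might have a kernel, forcing a passage to $A/\mathrm{Ann}(v)$ --- is in fact vacuous: if $a\in A$ satisfies $av=0$, then for every $g\in G$ one has $a(gv)=g(av)=0$ by commutativity, so $a$ kills the span of $Gv$, which is all of $V$, whence $a=0$ in $\mathrm{End}(V)$. Thus $\mathrm{Ann}(v)=0$ and $\mathrm{ev}_v$ is already an isomorphism of $G$-modules (this is the classical fact that a faithful cyclic module over a commutative algebra is isomorphic to the regular module); the same computation gives injectivity of $G\to G(A)$. Your proof is shorter and self-contained, avoiding both the structure theory of commutative groups and the citation of Hassett--Tschinkel; the paper's approach, in exchange, exhibits the finer structure of $A$ as a sum of local algebras indexed by the weights of the diagonalizable part and fits the statement into the framework of Proposition~\ref{propht}. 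Either way the conclusion is the same, so your proposal stands once you note that the annihilator quotient is unnecessary.
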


\begin{proof}
It is well known that every commutative linear algebraic group $G$ is isomorphic to a direct product $L\times T\times\GG_a^m$, where $L$ is a finite abelian group and $T$ is a torus. Moreover, the action of the group $L\times T$ on $V$ is diagonalizable.
Consider the weight decomposition $V=\oplus V_{\nu}$ of the module $V$ with respect to $L\times T$. Then every subspace $V_{\nu}$ is $G$-invariant and hence is a cyclic $G$-module. Enlarging the group $G$ we assume that $T$ consists of all
invertible operators which act on every $V_{\nu}$ by scalar multiplication.

It suffices to show that each $V_{\nu}$ may be identified with some local algebra $A$ in such a way that the action of $G$ on $V_{\nu}$ is a restriction of the action of $G(A)$ on $A$ by multiplication to a closed subgroup in $G(A)$. It follows from
\cite[Theorem~2.14]{HT}, see \cite[Remark~1.2]{AS} for details.
\end{proof}

\begin{remark}
The condition that the $G$-module $V$ is cyclic is essential, see Remark~\ref{polex}.
\end{remark}


\section{Affine monoids and group embeddings}
\label{sec5}

An affine algebraic monoid is an irreducible affine variety $S$ with an associative multiplication
$$
\mu \colon S\times S\to S,\quad (a,b)\mapsto ab,
$$
that is a morphism of algebraic varieties, and a unit element $e\in S$ such that $ea=ae=a$ for all $a\in S$. An example of an affine algebraic monoid is the multiplicative monoid of a finite dimensional associative algebra with unit.

The group of invertible elements $G(S)$ of a monoid $S$ is open in $S$. Moreover, $G(S)$ is a linear algebraic group. For a general theory of affine algebraic monoids, we refer to~\cite{Vi,Ri,Re}.

By a \emph{group embedding} we mean an irreducible affine variety $X$ with an open embedding $G\hookrightarrow X$ of a linear algebraic group $G$ such that both actions by left and right multiplications of $G$ on itsefl can be extended to $G$-actions on $X$. In other words, the variety $X$ is a $(G\times G)$-equivariant open embedding of the homogeneous space $(G\times G)/\Delta(G)$, where $\Delta(G)$ is the diagonal in $G\times G$.

Any affine monoid $S$ defines a group embedding $G(S)\hookrightarrow S$. The converse statement is proved in~\cite{Vi} under the assumption that the group $G$ is reductive and in~\cite{Ri} for arbitrary~$G$. For convenience of the reader we reproduce below the proof from~\cite{Ri}.

\begin{proposition} \label{proprit}
Let $G$ be a linear algebraic group. Then for every group embedding $G\hookrightarrow S$ there exists a structure of an affine algebraic monoid on $S$ such that the group $G$ coincides with the group of invertible elements $G(S)$.
\end{proposition}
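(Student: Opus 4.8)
The plan is to reconstruct the monoid multiplication on $S$ from the multiplication on $G$ by using the fact that $G$ is open and dense in $S$, so that the coordinate ring $\KK[S]$ embeds into $\KK[G]$ as a subalgebra. The key idea is that the multiplication map $\mu\colon G\times G\to G$ induces a comultiplication $\mu^*\colon \KK[G]\to\KK[G]\otimes\KK[G]$, and what I must show is that $\mu^*$ restricts to a coassociative map $\KK[S]\to\KK[S]\otimes\KK[S]$; dualizing back gives the extension of $\mu$ to $S\times S$. Thus the whole problem is translated into a statement about the subalgebra $\KK[S]\subset\KK[G]$ being a subcoalgebra for the coproduct $\mu^*$.

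\medskip

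First I would set $R=\KK[S]$ and $\KK[G]=R_f$ for an appropriate localization reflecting that $G$ is the principal open subset where invertible elements live; more conservatively, I use only that $R\subseteq\KK[G]$ and $\operatorname{Frac}(R)=\operatorname{Frac}(\KK[G])$. The essential input is the hypothesis that both left and right translations of $G$ on itself extend to regular $G$-actions on $X=S$. Concretely, for each $g\in G$ the left translation $\lambda_g$ and right translation $\rho_g$ are automorphisms of the variety $S$, hence act on $R$. I would next observe that for a fixed $g$, the map $a\mapsto ga$ is already defined on all of $S$ by the extended left action, and likewise $a\mapsto ag$; the task is to combine these into a single morphism $S\times S\to S$ that is regular jointly in both arguments.

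\medskip

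The technical heart is to show that the element $\mu^*(h)\in\KK[G\times G]=\KK[G]\otimes\KK[G]$ in fact lies in $R\otimes R$ for every $h\in R$. I would argue this using the two extended actions: the right-translation action makes $\KK[S]$ into a rational $G$-module under $\rho$, so each $h\in R$ spans a finite-dimensional $G$-submodule $W\subset R$, giving $\mu^*(h)\in R\otimes\KK[G]$ with the second factor controlled by a basis of $W$. Symmetrically, using the left-translation action on the first argument, one gets $\mu^*(h)\in\KK[G]\otimes R$. Combining the two containments inside $\KK[G]\otimes\KK[G]$ forces $\mu^*(h)\in R\otimes R$, which is exactly the assertion that multiplication extends to a morphism $S\times S\to S$. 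Coassociativity and the unit axiom then follow by density, since they hold on the dense open subset $G\times G\times G$ (respectively $\{e\}\times G$ and $G\times\{e\}$) and both sides are morphisms on $S$. Finally, $G\subseteq G(S)$ is immediate, and the reverse inclusion $G(S)\subseteq G$ holds because an invertible element $s$ yields, via left multiplication $\lambda_s$, an automorphism of $S$ fixing no proper structure, so $s\cdot s^{-1}=e$ places $s$ in the open orbit $G$; more carefully, any $s\in G(S)$ lies in the image of the open $G$-orbit since $\lambda_s$ restricts to a dominant morphism whose inverse is $\lambda_{s^{-1}}$.

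\medskip

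I expect the main obstacle to be the joint regularity step: proving that the separately-regular partial multiplications (regular in each variable after fixing the other) assemble into a map that is regular in both variables simultaneously. The finiteness of the $G$-submodule generated by each $h\in R$ under the extended translation actions is what makes this work, so the crux is verifying that $R$ is genuinely stable and that the two one-sided containments $\mu^*(R)\subseteq R\otimes\KK[G]$ and $\mu^*(R)\subseteq\KK[G]\otimes R$ really do intersect in $R\otimes R$. This last intersection fact is elementary once phrased correctly — it relies only on flatness of $\KK[G]$ over $R$, or directly on choosing compatible bases — but it is where the argument must be made carefully rather than waved through.
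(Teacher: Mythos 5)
Your proposal follows essentially the same route as the paper's proof: both reduce the problem to showing that the comultiplication $\mu^*$ carries $\KK[S]$ into $(\KK[G]\otimes\KK[S])\cap(\KK[S]\otimes\KK[G])=\KK[S]\otimes\KK[S]$ via the comorphisms of the two extended one-sided actions, then obtain associativity and the unit axiom by density, and finally deduce $G(S)=G$ from the fact that $sG$ is open and must meet $G$. The only cosmetic difference is that the key intersection identity is pure linear algebra over the field (extend a basis of $\KK[S]$ to a basis of $\KK[G]$) rather than a flatness statement, and your middle remarks about $\lambda_s$ ``fixing no proper structure'' can be dropped in favor of the last clause, which already gives the correct argument.
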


\begin{proof}
Let us prove that the multiplication morphism $G\times G\to G$ can be extended to a morphism $S\times S\to S$. Consider the morphisms given by left and right multiplication
$$
G\times S\to S \quad \text{and} \quad S\times G\to S
$$
and the corresponding comorphisms
$$
\KK[S]\to \KK[G]\otimes\KK[S] \quad \text{and} \quad \KK[S]\to \KK[S]\times\KK[G].
$$
Since the morphisms $G\times S\to S$ and $S\times G\to S$ extend the multiplication $G\times G\to G$, the image of the subalgebra $\KK[S]$ of $\KK[G]$ is contained in the intersection
$$
 (\KK[G]\otimes\KK[S])\cap(\KK[S]\otimes\KK[G])=\KK[S]\otimes\KK[S].
$$
This provides the desired extended morphism $S\times S\to S$. Such a morphism has the associativity property because it holds on an open dense subset $G$ in $S$. Similarly, the unit element $e\in G$ satisfies the property $es=se=s$ for all $s\in S$.

Every element of the group $G$ is invertible in $S$. Conversily, if $s\in S$ is invertible then the subvariety $sG$ is open in $S$ and thus the intersection $G\cap sG$ is non-empty. We conclude that $s$ lies in $G$ and $G(S)=G$.
\end{proof}

Let us recall that an action $G\times\AA^n\to\AA^n$ of a linear algebraic group $G$ is \emph{linearizable}, if the image of $G$ in $\Aut(X)$ is conjugate to a subgroup of the group $\GL_n(\KK)$ of all linear transformations of $\AA^n$.

\begin{proposition} \label{propmul}
Let $S$ be an affine algebraic monoid. Assume that the variety $S$ is isomorphic to an affine space. Then $S$ is the multiplicative monoid of a finite dimensional algebra
if and only if the action of the group $G(S)\times G(S)$ on $S$ by left and right multiplication is linearizable.
\end{proposition}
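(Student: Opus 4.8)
The plan is to prove the two implications separately. The forward implication is immediate: if $S$ is the multiplicative monoid of a finite dimensional algebra $A$, then $S=A$ as a variety (an affine space), and for fixed $g,h\in G(S)=G(A)$ the translation $s\mapsto gsh$ is linear in $s$ because the multiplication of $A$ is bilinear. Hence the $G(S)\times G(S)$-action is already linear, and in particular linearizable. All the work lies in the converse.

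For the converse, suppose the action is linearizable. Fix an automorphism of $S\cong\AA^n$ realizing the linearization; equivalently, identify $S$ with a vector space $V$ by a variety isomorphism under which the $G(S)\times G(S)$-action becomes linear. Transporting the monoid structure along this isomorphism, I obtain an associative multiplication morphism $\mu\colon V\times V\to V$ with a unit $e$. The key observation is that for every $g\in G(S)$ the left and right translations $L_g=\mu(g,-)$ and $R_g=\mu(-,g)$ are \emph{linear} endomorphisms of $V$, since they are the images of the elements $(g,e)$ and $(e,g^{-1})$ under the now-linear $G(S)\times G(S)$-action. Moreover $G(S)$ is open in the irreducible variety $S=V$, hence Zariski dense in $V$.

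I would then bootstrap from this separate linearity on the dense subgroup $G(S)$ to full bilinearity of $\mu$. Fixing $a\in G(S)$, the map $\mu(a,-)$ is linear, so the additivity defect $(a,b,b')\mapsto \mu(a,b+b')-\mu(a,b)-\mu(a,b')$ and the homogeneity defect $(a,b,\lambda)\mapsto \mu(a,\lambda b)-\lambda\mu(a,b)$, which are morphisms from affine spaces to $V$, vanish on $G(S)\times V\times V$ and $G(S)\times V\times\AA^1$ respectively. These are dense subsets of irreducible affine spaces, so a morphism into the separated variety $V$ that vanishes on them vanishes identically; thus $\mu(a,-)$ is linear for \emph{every} $a\in V$. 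The symmetric argument using $R_g$ shows that $\mu(-,b)$ is linear for every $b\in V$. Therefore $\mu$ is bilinear, and together with its associativity and its unit this exhibits $V$ as a finite dimensional associative algebra with unit whose multiplicative monoid is $S$ and whose group of invertible elements is $G(S)$.

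The step I expect to be the main obstacle is precisely this density bootstrap: justifying that separate linearity, verified only on the dense open subgroup $G(S)$, propagates to genuine bilinearity on all of $V$. This rests on the irreducibility of $\AA^n$ and on the principle that a morphism into a separated variety vanishing on a dense subset vanishes identically. A minor point to address alongside it is that the origin of the linearizing coordinates need not a priori be distinguished for the monoid; but once bilinearity is established one has $\mu(0,-)=\mu(-,0)=0$, so $0$ is automatically the (necessarily unique) absorbing element, and no inconsistency arises.
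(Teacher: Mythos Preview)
Your proof is correct. The paper argues the converse dually, via the comorphism: viewing $\KK[V]\subseteq\KK[G]$ through the dense open inclusion $G(S)\hookrightarrow V$, the comultiplication $\KK[V]\to\KK[V]\otimes\KK[V]$ restricted to the linear functions $V^*$ lands in $\KK[G]\otimes V^*$ (because left translations by $G(S)$ are linear) and in $V^*\otimes\KK[G]$ (because right translations are linear), hence in the intersection $V^*\otimes V^*$; dualizing gives a linear map $V\otimes V\to V$ realizing $\mu$, i.e.\ bilinearity. Your density argument is the geometric mirror of this: the step ``a morphism to a separated variety vanishing on a dense open vanishes identically'' is exactly what underlies the intersection computation $(\KK[G]\otimes V^*)\cap(V^*\otimes\KK[G])=V^*\otimes V^*$. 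Your version is more elementary, avoiding tensor manipulations and comorphisms entirely; the paper's version is more uniform and deliberately parallels its preceding proof (Proposition~\ref{proprit}) that any group embedding carries a monoid structure. The minor worry you flag about the origin is indeed harmless once bilinearity is established, and the small slip in labeling $R_g$ as the image of $(e,g^{-1})$ is just a convention choice for how $G(S)\times G(S)$ acts and does not affect the argument.
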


\begin{proof}
Assume that the group $G(S)\times G(S)$ acts linearly on the vector space $V$ identified with the variety $S$. The multiplication $V\times V\to V$ is given by the comorphism $\KK[V]\to\KK[V]\otimes\KK[V]$. Since the $(G(S)\times G(S))$-action on $V$ is linear,
for the restriction of the comorphism to the subspace $V^*\subseteq\KK[V]$ of all linear functions on $V$ we have
$$
V^*\to \KK[G]\otimes V^* \quad \text{and} \quad V^*\to V^*\otimes\KK[G].
$$
So the image of $V^*$ is contained in the intersection $(\KK[G]\otimes V^*)\cap(V^*\otimes\KK[G])=V^*\otimes V^*$. Hence the multiplication on $V$ is given by the  linear map $V\otimes V\to V$ dual to $V^*\to V^*\otimes V^*$. This proves that the multiplication on $V$ is bilinear and thus the monoid $S$ is isomorphic to the multiplicative monoid of a finite dimensional algebra.

The converse implication is straightforward.
\end{proof}

Let us recall that an affine algebraic monoid $S$ is \emph{reductive} if the group $G(S)$ is a reductive linear algebraic group.
Since every action of a reductvie group on an affine space with an open orbit is linearizable~\cite{Lu}, we obtain the following result.

\begin{corollary} \label{corred}
If $S$ is a reductive monoid and the variety $S$ is isomorphic to an affine space, then $S$ is the multiplicative monoid of a finite dimensional algebra.
\end{corollary}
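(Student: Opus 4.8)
The plan is to derive this corollary directly from Proposition~\ref{propmul} together with Luna's linearizability theorem~\cite{Lu} quoted just above. Since $S$ is isomorphic to an affine space, Proposition~\ref{propmul} tells us that $S$ is the multiplicative monoid of a finite dimensional algebra precisely when the action of $G(S)\times G(S)$ on $S$ by left and right multiplication, $(g,h)\cdot s=gsh^{-1}$, is linearizable. So the entire task reduces to establishing linearizability of this two-sided action.

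First I would check that this action admits an open orbit. The orbit of the unit element $e$ is $\{geh^{-1}\mid g,h\in G(S)\}=G(S)$, and $G(S)$ is open and dense in $S$ because it is the group of invertible elements of the monoid $S$. Hence the two-sided multiplication action has $G(S)$ as an open orbit; equivalently, $S$ is a group embedding of $G(S)$ in the sense discussed in Section~\ref{sec5}.

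Next I would invoke reductivity. By hypothesis $G(S)$ is a reductive linear algebraic group, and a direct product of reductive groups is again reductive, so $G(S)\times G(S)$ is reductive. We are therefore in exactly the setting of Luna's theorem: a reductive group acting on an affine space $S\cong\AA^n$ with an open orbit. By~\cite{Lu} such an action is linearizable, meaning that the image of $G(S)\times G(S)$ in $\Aut(S)$ is conjugate to a subgroup of $\GL_n(\KK)$.

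Finally, feeding this linearizability back into Proposition~\ref{propmul} yields that $S$ is the multiplicative monoid of a finite dimensional algebra, which completes the argument. I do not expect any genuine obstacle here: the substantive content is carried entirely by the two cited results, and the only elementary points needing verification are that the two-sided action has an open orbit (immediate from openness of $G(S)$ in $S$) and that reductivity is preserved under forming the product group (standard).
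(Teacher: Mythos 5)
Your proposal is correct and follows exactly the paper's argument: the corollary is obtained by combining Proposition~\ref{propmul} with the fact that the two-sided $(G(S)\times G(S))$-action has the open orbit $G(S)$ and that this product group is reductive, so Luna's theorem~\cite{Lu} gives linearizability. The paper leaves these routine verifications implicit, but there is no difference in substance.
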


Clearly, the multiplicative monoid of a finite dimensional algebra $R$ is reductive if and only if $R$ is a semisimple $R$-module. By the Artin-Wedderburn Theorem,
this is the case if and only if $R$ is a direct sum of matrix algebras $\text{Mat}(n_i\times n_i,\KK)$.

\begin{remark}
For commutative monoids the statement of Corollary~\ref{corred} does not hold.
\end{remark}

Propositions~\ref{proprit} and~\ref{propmul} provide an alternative proof of Theorem~\ref{tmain}. Indeed, for a commutative linear algebraic group $G$ and a prehomogeneous $G$-module $V$ the orbit map to an open $G$-orbit gives rise to a group embedding $G\hookrightarrow V$. Since the $G$-action on $V$ is linear, the monoid structure on $V$ comes from a finite dimensional algebra $A$ with the underlying vector space $V$ and the group $G$ is identified with the group of invertible elements $G(A)$. The group $G(A)$ is commutative and dense in $A$, so the algebra $A$ is commutative as well. Since the multiplication on $V$ is defined by the comorphism $\KK[V]\to\KK[G]\otimes\KK[V]$, the algebra structure on $V$ is uniquely determined by the $G$-module structure on $V$.

\begin{example}
Let $V$ be a prehomogeneous $G$-module with trivial generic stabilizer, where $G$ is a non-commutative linear algebraic group. The inclusion of an open orbit $G\to V$ need not be a group embedding. For instance, take the group
$$
G=\left\{
\begin{pmatrix}
t & a \\
0 & t^{-1}
\end{pmatrix}, \
t\in\GG_m, \ a\in\GG_a\right\}
$$
and its tautological module $\KK^2$. The orbit  of the vector $(0,1)$ is open in $\KK^2$, it consists of the vectors $(a,t^{-1})$ or, equivalently, of the vectors $(x,y)$, $y\ne 0$. The right multiplication by an element
$$
\begin{pmatrix}
s & b \\
0 & s^{-1}
\end{pmatrix}^{-1}
$$
gives the vector $(sa-tb,t^{-1}s)$ or, equivalently, $(sx-by^{-1},sy)$. Such an action can not be extended to $\KK^2$.
\end{example}


\section{Additive actions on toric varieties and Cox rings}
\label{sec6}

Let $X$ be an irreducible algebraic variety over the ground field $\KK$. An \emph{additive action} on $X$ is a regular faithful action $\GG_a^m\times X\to X$ with an open orbit. Let us recall that a variety $X$ is \emph{toric} if $X$ is normal
and there exists an action of an algebraic torus $T$ on $X$ with an open orbit. Additive actions on toric varieties are studied in~\cite{AR}.

If a variety $X$ admits an additive action, then every regular invertible function on $X$ is constant and the divisor class group $\Cl(X)$ is a free finitely generated abelian group~\cite[Lemma~1]{APS}. For a toric variety $X$ these conditions imply that $X$ can be realized as a good quotient $\pi\colon U\stackrel{/\!/H}\longrightarrow X$ of an open subset $U\subseteq\AA^n$ whose complement is a collection of coordinate subspaces of codimensions at least $2$ in $\AA^n$  by a linear action of a torus $H$. Such a realization can be chosen in a canonical way. Namely, the Cox ring
$$
R(X)=\bigoplus_{[D]\in\Cl(X)} H^0(X,D)
$$
of a toric variety $X$ is a polynomial ring graded by the group $\Cl(X)$. The grading defines a linear action of the characteristic torus $H:=\Spec(\KK[\Cl(X)])$ on the total coordinate space $\AA^n:=\Spec(R(X))$. A canonically defined open subset
$U\subseteq\AA^n$, whose complement is a union of some coordinate subspaces of codimensions at least $2$, gives rise to the so-called characteristic space  $p\colon U\stackrel{/\!/H}\longrightarrow X$; we refer to ~\cite{Cox} and \cite[Chapter~II]{ADHL}
for details.

An additive action $\GG_a^m\times X\to X$ can be lifted to an action $\GG_a^m\times\AA^n\to\AA^n$ on the total coordinate space commuting with the $H$-action. This defines an action $G\times\AA^n\to\AA^n$ of the commutative group $G:=H\times\GG_a^m$ with an open orbit. Let us say that the action $G\times\AA^n\to\AA^n$ is \emph{associated} with the given additive action on a toric variety $X$.

We say that a toric variety $X$ is a \emph{big open subset} of a toric variety $X'$ if $X$ is isomorphic to an open toric subset $W$ of the variety $X'$ such that $\codim_{X'} X'\setminus W\ge 2$.

\begin{proposition} \label{assact}
An action $G\times\AA^n\to\AA^n$ associated with an additive action on a toric variety $X$ is linearizable if and only if $X$ is a big open subset of a product of projective spaces.
\end{proposition}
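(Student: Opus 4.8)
The plan is to combine Theorem~\ref{tmain} with the explicit shape of the $\Cl(X)$-grading on the Cox ring. Throughout I identify $\AA^n=\Spec R(X)$ with $\Spec\KK[x_\rho]$, where the variables $x_\rho$ are indexed by the rays $\rho$ of the fan $\Sigma$ of $X$, each carrying the degree $\deg(x_\rho)\in\Cl(X)=\XX(H)$ equal to the class of the corresponding invariant divisor; the characteristic torus $H$ acts diagonally in these coordinates. Since $X$ admits an additive action, $\Cl(X)$ is free and $X$ has no torus factor (an additive group cannot act on a $\GG_m$-factor with an open orbit), so the rays span $N_\RR$ and we have the exact sequence $0\to M\to\ZZ^{\Sigma(1)}\to\Cl(X)\to 0$. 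The one computation that drives both implications is the following: if $\Cl(X)\cong\ZZ^s$ with $s=\rk\Cl(X)$, if the classes $\deg(x_\rho)$ take exactly the $s$ distinct values of a basis $e_1,\dots,e_s$, and if the variables are grouped by their degree, then the homogeneous component $R(X)_{e_j}$ is precisely the span of the group-$j$ variables, because a monomial of degree $e_j$ with nonnegative exponents must be a single such variable.

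For the implication ``$\Leftarrow$'' I would argue as follows. Suppose $X$ is a big open subset of $X'=\PP^{k_1}\times\cdots\times\PP^{k_s}$. Removing a subset of codimension at least $2$ changes neither $\Cl(X)$, nor $R(X)$, nor $\AA^n$, nor $H$, so these agree with the data of $X'$; in particular $R(X)=\KK[x_{j,l}]$ with $\deg(x_{j,l})=e_j$. By construction the lifted $\GG_a^m$-action commutes with $H$, hence is realized by grading-preserving automorphisms of $R(X)$. By the computation above each such automorphism carries a group-$j$ variable into $R(X)_{e_j}$, the span of the group-$j$ variables, and is therefore linear. Together with the linear $H$-action this makes the whole associated $G$-action linear, hence linearizable.

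For ``$\Rightarrow$'' I would use Theorem~\ref{tmain}. Linearizability means that, after an automorphism of $\AA^n$, the image $\bar G$ of $G$ in $\Aut(\AA^n)$ is a commutative subgroup of $\GL_n$ acting effectively with an open orbit; thus $\AA^n$ is a prehomogeneous $\bar G$-module and Theorem~\ref{tmain} identifies it with the $G(A)$-module $A=A_1\oplus\cdots\oplus A_r$, $\bar G=G(A)$. Writing $\bar G=\bar G_{\mathrm{tor}}\times\bar G_{\mathrm{unip}}$ and using that the image of the unipotent $\GG_a^m$ is unipotent, the torus $\bar G_{\mathrm{tor}}$ equals the image $\bar H$ of $H$; since the ray classes generate $\Cl(X)$, the torus $H$ acts faithfully, so $\dim\bar H=s$, forcing $r=s$ and $\bar H=T_A$, the maximal torus of $G(A)$. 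Now $T_A$ acts on each summand $A_i$ through a single character $e_i$, so the multiset of $H$-weights on $\AA^n$ is $\{e_1,\dots,e_s\}$ with $e_i$ of multiplicity $\dim A_i$; as weights are invariant under the linearizing change of coordinates, this multiset equals $\{\deg(x_\rho)\}$. Hence the $\deg(x_\rho)$ are exactly the basis elements $e_j$, the number of rays of class $e_j$ being $\dim A_j$, which is precisely the Cox data of $X'=\prod_j\PP^{\dim A_j-1}$.

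It remains to upgrade this numerical coincidence to a big open embedding $X\hookrightarrow X'$, and I expect this to be the main obstacle. From the exact sequence one gets, in $N=\ZZ^{\Sigma(1)}/\langle w_j\rangle$ with $w_j=\sum_{\rho\in\text{group }j}e_\rho$, the relations $\sum_{\rho\in\text{group }j}v_\rho=0$ for the primitive generators; in particular each group has at least two rays, so no summand $A_j$ equals $\KK$. If $\sigma=\cone(v_\rho:\rho\in T)$ is any cone of $\Sigma_X$, strong convexity forbids $T$ from containing an entire group, so $T$ meets each group in a proper subset; since $N=\bigoplus_j N_j$ with $v_\rho\in N_{j(\rho)}$, the cone $\sigma$ is then a product of faces of the fans of the factors $\PP^{k_j}$, i.e.\ a cone of the product fan $\Sigma_{X'}$. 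Thus $\Sigma_X$ is a subfan of $\Sigma_{X'}$, yielding an open toric immersion $X\hookrightarrow X'$ with the same rays and hence complement of codimension at least $2$. The remaining two steps — the grading computation and Theorem~\ref{tmain} — are routine; the delicate points are the effectiveness bookkeeping that gives $\bar H=T_A$ and this final fan-theoretic passage, which relies on the vanishing relations $\sum_\rho v_\rho=0$ within each group and on the product structure of the fan of $\prod_j\PP^{k_j}$.
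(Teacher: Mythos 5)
Your proof is correct and follows the paper's overall strategy in the forward direction: apply Theorem~\ref{tmain} to put $\AA^n=V_1\oplus\cdots\oplus V_r$ with the characteristic torus $H$ acting on $V_j$ by the $j$-th element of a basis of $\XX(H)=\Cl(X)$, so that the classes $\deg(x_\rho)$ form a multiset of basis elements. Where you genuinely diverge is in how the two remaining steps are closed. For the passage from this weight data to ``$X$ is a big open subset of $\prod_j\PP(V_j)$,'' the paper simply cites \cite[Exercise~2.13]{ADHL} on the maximal open subset admitting the characteristic-space quotient; you instead carry out the fan-theoretic computation directly (the relations $\sum_{\rho\in\text{group }j}v_\rho=0$ coming from dualizing the divisor sequence, strong convexity forcing each cone to meet each group in a proper subset, and the resulting inclusion of $\Sigma_X$ into the product fan with the same rays), which is a complete and correct substitute. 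For the converse, the paper invokes the $\GG_a^m$-linearization of the line bundles $L_i$ from \cite[Section~2.4]{KKLV} and identifies $H^0(X,L_i)$ with $V_i^*$; your argument is more elementary: commuting with $H$ forces the lifted automorphisms to preserve each $\Cl(X)$-graded piece of $R(X)$, and your monomial computation shows $R(X)_{e_j}$ consists only of linear forms, so the lifted action is already linear in the Cox coordinates. Both routes are valid; yours is more self-contained, the paper's is shorter. The two points you assert without detailed proof --- that the associated $G$-action on $\AA^n$ is effective (it descends to the faithful action on $X$, and $H$ acts faithfully because the ray classes generate $\Cl(X)$), and that the $H$-weights are unchanged by the linearizing automorphism (conjugate linear torus actions on $\AA^n$ are isomorphic as representations, as one sees by comparing tangent representations at fixed points) --- are standard and are likewise left implicit in the paper.
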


\begin{proof}
It follows from Theorem~\ref{tmain} that if an action $G\times\AA^n\to\AA^n$ is linearizable, then in suitable coordinates we have $\AA^n=V_1\oplus\ldots\oplus V_r$, where $r=\rk(G)=\dim H$ and every element $(t_1,\ldots,t_r)\in H$ acts on every subspace $V_i$ via scalar multiplication by $t_i$. Let $d_i:=\dim V_i$. The torus $H$ acts on $\AA^n$ linearly with characters $e_1 (d_1 \text{times}),\ldots,e_r (d_r \text{times})$, where $e_1,\ldots,e_r$ form a basis of the lattice of characters $\XX(H)$.

It is easy to show (see, e.g.,~\cite[Exercise~2.13]{ADHL}) that there is a unique maximal open subset $U$ in $\AA^n$ such that there exists a good quotient $\pi\colon U\stackrel{/\!/H}\longrightarrow X$ which is the characteristic space of $X$; namely, $U=(V_1\setminus\{0\})\times\ldots\times(V_r\setminus\{0\})$ and $X=\PP(V_1)\times\ldots\times\PP(V_r)$. Other open subsets with this property are contained in $U$ and correspond to big open toric subsets of $\PP(V_1)\times\ldots\times\PP(V_r)$.

Conversely, consider an additive action $\GG_a^n\times X\to X$ on a big open toric subset $X$ of $\PP(V_1)\times\ldots\times\PP(V_r)$. The Picard group of $X$ is freely generated by the line bundles $L_1,\ldots,L_r$ corresponding
to ample generators of the Picard groups of the factors $\PP(V_1),\ldots,\PP(V_r)$. The space of global sections of $L_i$ is identified with the dual space $V_i^*$. By~\cite[Section~2.4]{KKLV}, every line bundle $L_i$ admits a $\GG_a^m$-linearization,
and thus the lifted action of the group $G=H\times\GG_a^m$ to the total coordinate space $\AA^n=V_1\oplus\ldots\oplus V_r$ of $X$ is linear.
\end{proof}

\begin{example}
Consider the action $\GG_a^n\times\AA^n\to\AA^n$ by translations. This is an additive action on a toric variety, and the associated action coincides with the original one. Since the action is transitive, it has no fixed point and thus it is not linearizable.
\end{example}

\begin{example}
Let $X$ be the Hirzebruch surface $\FF_d$. This toric variety admits an additive action normalized by the acting torus. The lifting of this action to the Cox ring extends to an action of the group $G=\GG_m^2\times\GG_a^2$ on $\AA^4$ with an open orbit. Explicitly this action is given by
$$
(x_1,x_2,x_3,x_4)\mapsto (\lambda_1x_1,\lambda_2x_2.\lambda_1x_3+\lambda_1\alpha_1x_1,\lambda_1^d\lambda_2x_4+\lambda_1^d\lambda_2\alpha_2x_1^dx_2), \  \lambda_1,\lambda_2\in\GG_m, \alpha_1,\alpha_2\in\GG_a.
$$
see \cite[Example~6.4]{AR}. By~Proposition~\ref{assact}, this action is not linearizable for $d\ge 1$. If $d=0$ then $X\cong\PP^1\times \PP^1$, and the action is linear.

\end{example}



\begin{thebibliography}{99}
%
\bibitem{Arz2}
Ivan Arzhantsev. Flag varieties as equivariant compactifications of $\GG_a^n$. Proc. Amer. Math. Soc. 139 (2011), no.~3, 783--786
%
\bibitem{ADHL}
Ivan Arzhantsev, Ulrich Derenthal, J\"urgen Hausen, and Antonio Laface. {\it Cox rings}. Cambridge
Studies in Adv. Math. 144, Cambridge University Press, New York, 2015
%
\bibitem{AK}
Ivan Arzhantsev and Polina Kotenkova. Equivariant embeddings of commutative linear algebraic groups of corank one. Doc. Math. 20 (2015), 1039-1053
%
\bibitem{APS}
 Ivan Arzhantsev, Alexander Perepechko, and Hendrik S\"u\ss. Infinite transitivity on universal torsors. J.~London Math. Soc. 89 (2014), no.~3, 762-778
%
\bibitem{AR}
 Ivan Arzhantsev and Elena Romaskevich. Additive actions on toric varieties. Proc. Amer. Math. Soc. 145 (2017), no. 5, 1865-1879
%
\bibitem{AS}
Ivan Arzhantsev and Elena Sharoyko. Hassett-Tschinkel correspondence: Modality and projective hypersurfaces. J. Algebra 348 (2011), no.~1, 217--232
%
\bibitem{AM}
Michael  Atiyah and Ian Macdonald. \emph{Introduction to Commutative Algebra}. University of Oxford, Addison-Wesley Series in Math., 1969
%
\bibitem{Cox}
David Cox. The homogeneous coordinate ring of a toric variety. J. Alg. Geom. 4 (1995), no.~1, 17--50
%
\bibitem{CLS}
David Cox, John Little, and Henry Schenck. \emph{Toric Varieties}. Graduate Studies Math. 124, AMS, Providence, RI, 2011
%
\bibitem{De}
Michel Demazure. Sous-groupes algebriques de rang maximum du groupe de Cremona. Ann. Sci. Ecole Norm. Sup. 3 (1970), 507--588
%
\bibitem{Fe}
Evgeny Feigin. $\mathbb{G}^M_a$ degeneration of flag varieties. Selecta Math. New Ser. 18 (2012),
no.~3, 513--537
%
\bibitem{FH}
Baohua Fu and Jun-Muk Hwang. Uniqueness of equivariant compactifications of $\CC^n$
by a Fano manifold of Picard number $1$. Math. Research Letters 21 (2014), no.~1, 121--125
%
\bibitem{Fu}
William Fulton. \emph{Introduction to toric varieties}. Annales of Math. Studies 131, Princeton University Press, Princeton, NJ, 1993
%
\bibitem{HT}
Brendan Hassett and Yuri Tschinkel. Geometry of equivariant compactifications of $\GG_a^n$. Int. Math. Res. Notices 1999 (1999), no.~22, 1211-1230
%
\bibitem{Hum}
James Humphreys. \emph{Linear Algebraic Groups}. Graduate Texts Math. 21, Springer Verlag, New York, 1975
%
\bibitem{Ki}
Tatsuo Kimura. \emph{Introduction to prehomogeneous vector spaces}. Transl. Math. Monographs 215, Providence, R.I., AMS, 2003
%
\bibitem{KKLV}
Friedrich  Knop, Hanspeter Kraft, Domingo Luna, and Therry Vust. Local properties of algebraic group actions. In: Algebraische Transformationsgruppen
und Invariantentheorie, DMV Sem., vol.~13, Birkhäuser, Basel, 1989, 63-75
%
\bibitem{KL}
Friedrich  Knop and Herbert Lange. Commutative algebraic groups and intersections of
quadrics. Math. Ann. 267 (1984), no.~4, 555-571
%
\bibitem{Lu}
Domingo Luna. Slices \'etales. Bull. Soc. Math. France, Mem. 33 (1973), 81-105
%
\bibitem{Ma}
Guerino  Mazzola. Generic finite schemes and Hochschild cocycles. Comm. Math. Helv. 55 (1980), 267-293
%
\bibitem{Oda}
Tadao Oda. \emph{Convex bodies and algebraic geometry: an introduction to toric varieties}. A Series of Modern Surveys in Math. 15, Springer Verlag, Berlin, 1988
%
\bibitem{PV}
Vladimir Popov and Ernest Vinberg. \emph {Invariant Theory}.
Algebraic Geometry IV, Encyclopaedia Math. Sciences~55, 123-284, Springer-Verlag, Berlin, 1994
%
\bibitem{Re}
Lex Renner. \emph{Linear Algebraic Monoids}. Encyclopaedia Math. Sciences~134, Springer, Berlin, 2005
%
\bibitem{Ri}
Alvaro Rittatore. Algebraic monoids and group embeddings. Transform. Groups 3 (1998), no.~4, 375-396
%
\bibitem{SK}
Mikio Sato and Tatsuo Kimura. A classification of irreducible prehomogeneous vector spaces and their relative invariants. Nagoya Math. J. 65 (1977), 1-155
%
\bibitem{Sha}
Elena Sharoiko. Hassett-Tschinkel correspondence and automorphisms of a quadric. Sbornik Math. 200 (2009), no.~11, 1715-1729
%
\bibitem{Sh}
Grigory Shpiz. Classification of irreducible locally transitive linear Lie groups. In: Geometric Methods in Problems of Algebra and Analysis, Interuniv. Thematic Work Collect., Yaroslavl (1978), 152-160 (Russian)
%
\bibitem{ST}
Dmitry Suprunenko and Regina Tyshkevich. \emph{Commutative matrices}. Academic Press, New York, 1969
%
\bibitem{Vi-1}
Ernest Vinberg. Invariant linear connections in a homogeneous space. Tr. Mosk. Mat. 0.-va 9 (1960), 191-210 (Russian)
%
\bibitem{Vi}
Ernest Vinberg. On reductive algebraic semigroups. In: Lie Groups and Lie Algebras. E.B.~Dynkin Seminar.
Amer. Math. Soc. Transl., Serie 2, 169 (1994), 145-182
%
\end{thebibliography}
\end{document}